\documentclass[12pt,reqno]{amsart}
\usepackage{graphicx} 
\usepackage{fancyhdr}
\usepackage{latexsym, amsmath, enumerate, color, amssymb,amsthm, amsfonts, hyperref,mathtools, rotating, amscd, enumerate, multirow, makecell}
\allowdisplaybreaks[4]
\newtheorem{theorem}{Theorem}
\newtheorem{corollary}[theorem]{Corollary}
\newtheorem{lemma}[theorem]{Lemma}

\theoremstyle{definition} 
\newtheorem{definition}[theorem]{Definition}

\newtheorem{remark}[theorem]{Remark}
\setlength{\topmargin}{-60pt}
\setlength{\headheight}{12truept}
\setlength{\headsep}{25pt}
\setlength{\footskip}{37pt}
\setlength{\hoffset}{1mm}
\setlength{\voffset}{34pt}
\setlength{\oddsidemargin}{-5mm}
\setlength{\evensidemargin}{-5mm}
\setlength{\textheight}{215mm}
\setlength{\textwidth}{165mm}


\title{Formulas for the Generalized Frobenius Number of Triangular Numbers}
\author{Kittipong Subwattanachai}
\date{\today}

\subjclass[2020]{Primary 
	11D07; 
	Secondary 11B34; 
}
\keywords{Frobenius problem, generalized Frobenius numbers, linear Diophantine problem of Frobenius, triangular numbers}
\begin{document}
	
	\maketitle
	
	\begin{abstract}
		For $ k \geq 2 $, let $ A = (a_{1}, a_{2}, \ldots, a_{k}) $ be a $k$-tuple of positive integers with $\gcd(a_{1}, a_2, \ldots, a_k) = 1$. For a non-negative integer $s$, the generalized Frobenius number of $A$, denoted as $\mathtt{g}(A;s) = \mathtt{g}(a_1, a_2, \ldots, a_k;s)$, represents the largest integer that has at most $s$ representations in terms of $a_1, a_2, \ldots, a_k$ with non-negative integer coefficients.
		In this article, we provide a formula for the generalized Frobenius number of three consecutive triangular numbers, $\mathtt{g}(t_{n}, t_{n+1}, t_{n+2};s) $, valid for all $s \geq 0$ where $t_n$ is given by $\binom{n+1}{2}$. Furthermore, we present the proof of Komatsu's conjecture from \cite{Komatsu-triangular}.
	\end{abstract}
	
	\section{Introduction}
        The purpose of this paper is to provide a confirmation of Komatsu's conjecture from \cite{Komatsu-triangular}. He proposed a conjecture for the explicit formula for generalized Frobenius numbers of three consecutive triangular numbers. Before reaching the conclusion, we will introduce some notations and background informatio. Afterward, we will present the full statement of Komatsu's conjecture.  
        
	Let $ A = (a_{1}, a_{2}, \ldots, a_{k}) $ be a $k$-tuple of positive integers with $ \gcd(A) = \gcd(a_{1}, a_2, \ldots, a_k) =1$. We let, for $n \in\mathbb{Z}_{\geq 0}$,
	 $\operatorname{d}(n;A) = \operatorname{d}(n;a_{1}, a_{2}, \ldots, a_{k})$ be the number of representations to $a_{1}x_{1} + a_{2}x_{2} + \cdots + a_{k}x_{k} = n$. Its generating series is given by
	\begin{equation*}
		\sum_{n\geq 0} \operatorname{d}(n;a_1,\dots,a_k) x^n = \frac{1}{(1-x^{a_{1}})(1-x^{a_{2}})\cdots(1-x^{a_{k}})}.
	\end{equation*}
    The study of the generalized Frobenius number has gained attention due to its connection to number theory and combinatorial structures. 
    Sylvester \cite{Sylvester-On_the_partition_of_numbers} and Cayley \cite{Cayley:double_partition} demonstrated that the function $\operatorname{d}(n; a_1, a_2, \ldots, a_k)$ can be expressed as the sum of two components: a polynomial in $n$ of degree $k-1$ and a periodic function with a period of $a_1 a_2 \cdots a_k$. 
    Beck, Gessel, and Komatsu \cite{Beck_Gessel_Komatsu} refined this result by providing an explicit formula for the polynomial component using Bernoulli numbers. 
    For the two-variable case, Tripathi \cite{Tripathi-The_number_of_solutions} derived a specific formula for $\operatorname{d}(n; a_1, a_2)$. Extending to three variables, Komatsu \cite{Komatsu-On_the_number_of_solutions} showed that the periodic component can be expressed using trigonometric functions when the integers $a_1, a_2,$ and $a_3$ are pairwise coprime. 
    Additionally, Binner \cite{Binner-number of solutions to axby+cz=n} derived a formula for the number of non-negative integer solutions to the equation $ax + by + cz = n$ and established a connection between the number of solutions and quadratic residues.

	
	
	We remind readers of the well-known linear Diophantine problem introduced by Sylvester, known as the \emph{Frobenius problem}\footnote{It is also known as the coin problem, postage stamp problem, or Chicken McNugget problem.}:
    ``Given positive integers $a_{1}, a_{2}, \ldots, a_{k}$ with they are relatively prime,
    find the largest integer that \emph{cannot} be expressed as a non-negative integer linear combination of these numbers.''
	\emph{The largest integer} is called the \emph{Frobenius number} of the tuple $A = (a_{1}, a_{2}, \ldots, a_{k})$, and is denoted by $\mathtt{g}(A) = \mathtt{g}(a_{1}, a_{2}, \ldots, a_{k})$. 
	Then, with the above notation, the Frobenius number is given by 
	\begin{equation*}
		\mathtt{g}(A) = \max \{n \in \mathbb{Z} \mid \operatorname{d}(n;A) = 0\}.
	\end{equation*}
	Note that if all non-negative integers can be expressed as a non-negative integer linear combination of $A$, then $\mathtt{g}(A)= -1$. For example, $\mathtt{g}(1,2) = -1$.
	
	For two variables $A = (a,b) \subset \mathbb{Z}_{>0}$, it is shown by Sylvester \cite{Sylvester-On_subinvariants} that
	\begin{equation}\label{eq:twovariableclassical}
		\mathtt{g}(a,b) = ab-a-b.
	\end{equation}
	For instance, if $A = (a,b) = (7,11)$, then the Frobenius number of $A$ is given by $\mathtt{g}(7,11) = 77-7-11 = 59$. It means that all integers $n > 59$ can be expressed as a non-negative integer linear combination of $7$ and $11$. 
	
    The formulas for calculating the Frobenius number in three variables has been provided by Tripathi \cite{Tripathi-Formulae_for_Frobenius_three_variables}. 
    Unfortunately, it is important to recognize that deriving closed-form solutions becomes more challenging as the number of variables increases beyond three $(k > 3)$. 
    Despite these difficulties, various formulas have been developed for Frobenius numbers in specific contexts or particular cases. For example, explicit formulas have been established for certain sequences, including arithmetic, geometric-like, Fibonacci, Mersenne, and triangular numbers (see \cite{Roble_Rosales} and references therein).
\\
	
	In this work, we focus on a generalization of the Frobenius number. For a given integer $s\geq 0$, let $$\mathtt{g}(A;s) = \mathtt{g}(a_1, a_2, \ldots, a_k;s) = \max\{ n \in \mathbb{Z} \mid \operatorname{d}(n;A) \leq s\}$$ be the largest integer such that the number of expressions that can be represented by $a_1, a_2, \ldots, a_k$ is at most $s$ ways. That means all integers bigger than $\mathtt{g}(A;s)$ have at least $s+1$ representations. The $\mathtt{g}(A;s)$ is called \emph{the generalized Frobenius number}. Furthermore, $\mathtt{g}(A;s)$ is well-defined (i.e. bounded above) (see \cite{Fukshansky-Schurmann-Bounds_on}).  Notice that $\mathtt{g}(a_1, a_2, \ldots, a_k;0) = \mathtt{g}(a_1, a_2, \ldots, a_k) $.
	
	As a generalization of \eqref{eq:twovariableclassical}, for $A = (a,b)$ and $s \in\mathbb{Z}_{\geq0}$, (see \cite{Beck_Robins}), an exact formula for $\mathtt{g}(A,s) = \mathtt{g}(a,b;s)$ is given by 
	\begin{equation*}
		\mathtt{g}(a,b;s) = (s+1)ab-a-b. 
	\end{equation*}
    In general, $\operatorname{d}\!\big( \mathtt{g}(A, s); A \big) \leq s$, but for the case of two variables, by Theorem 1 in Beck and Robins \cite{Beck_Robins}, one can see that $\operatorname{d}\!\big( \mathtt{g}(A, s); A \big) = s$ when $|A| = 2$. 
    While exact formulas for the generalized Frobenius number in cases where $k \geq 3$ remain unknown, specific results exist for $k = 3$ in particular cases. Examples include explicit formulas for triangular numbers \cite{Komatsu-triangular}, repunits \cite{Komatsu-The Frobenius number repunits}, and Fibonacci numbers \cite{Komatsu-Ying-Frob_Fibonacci}. Recently, Binner \cite{Binner-binner2021bounds} derived bounds for the number of solutions to the equation $a_1 x_1 + a_2 x_2 + a_3 x_3 = n$ and leveraged these bounds to solve $\mathtt{g}(a_1, a_2, a_3; s)$ for large values of $s$. Woods \cite{Woods-woods2022generalized} also provided formulas and asymptotic results for the generalized Frobenius problem by utilizing the restricted partition function.
	
	One of the special cases is to calculate the Frobenius number for three consecutive triangular numbers, where the $n$th triangular number $t_{n}$ is defined as $t_{n} = \frac{n(n+1)}{2}$ for $n \geq 1$. The explicit formula for $s=0$ (the original Frobenius number) is provided by Robles-P\'{e}rez and Rosales \cite{Roble_Rosales}
	\begin{equation*}
		\mathtt{g} (t_{n}, t_{n+1}, t_{n+2};0)
		=
		\begin{cases}
			\frac{(n+1)(n+2)}{4}(3n) -1,
			&\text{for even } n,
			\\
			\frac{(n+1)(n+2)}{4}(3n-3) -1,
                &\text{for odd } n,
		\end{cases}
	\end{equation*}
	and, for $s = 1,2, \ldots, 10$, they are presented by Komatsu \cite{Komatsu-triangular}. Fuethermore, Komatsu also formulates a conjecture for the explicit formula for $s\geq 0$ as follows;
    
	\textit{``For some non-negative integer $s$, there exists an odd integer $q$ and integers $n_{j}\, (j=1,2,3,4)$ such that
	\begin{equation*}
		\mathtt{g}\left(t_{n}, t_{n+1}, t_{n+2} ; s\right)+1
		= \begin{cases}
			\frac{(q n)(n+1)(n+2)}{4}, 
                &\text{for even } n \geq n_{2},
			\\
			\frac{(q n-3)(n+1)(n+2)}{4}, 
                &\text{for odd } n \geq n_{1},
		\end{cases}
	\end{equation*}
	and
	\begin{equation*}
		\mathtt{g}\left(t_{n}, t_{n+1}, t_{n+2} ; s+1\right)+1
		= 
		\begin{cases}
			\frac{(q n+6)(n+1)(n+2)}{4}, 
                &\text{ for even } n \geq n_{4},
			\\
			\frac{(q n+3)(n+1)(n+2)}{4}, 
                &\text{ for odd } n \geq n_{3},
		\end{cases}
	\end{equation*}
	and so on. 
	For some non-negative integer $s^{\prime}$, there exists an even integer $q^{\prime}$ and integers $n_{5}$ and $n_{6}$ such that
	\begin{equation*}
		\mathtt{g}\left(t_{n}, t_{n+1}, t_{n+2} ; s^{\prime}\right)+1
		=
		\frac{\left(q^{\prime} n\right)(n+1)(n+2)}{4} \quad\left(n \geq n_{5}\right)
	\end{equation*}	
	and
	\begin{equation*}
		\mathtt{g}\left(t_{n}, t_{n+1}, t_{n+2} ; s^{\prime}+1\right)+1
		=
		\frac{\left(q^{\prime} n+6\right)(n+1)(n+2)}{4} \quad\left(n \geq n_{6}\right)
	\end{equation*}
	and so on."} 
    In other words, for many integers $s \geq 0$ (or $s^{\prime}\geq 0$), we can determine $\mathtt{g}(t_{n}, t_{n+1}, t_{n+1}; s+1)$ from $\mathtt{g}(t_{n}, t_{n+1}, t_{n+1}; s)$ and the difference between those two is $\frac{6(n+1)(n+2)}{4}$.
    
    We make this precise in our main results, which are summarized in Theorem \ref{thm-main-formula} and Theorem \ref{thm-different-of-g}. Theorem \ref{thm-main-formula} presents an explicit formula for the generalized Frobenius number involving three consecutive triangular numbers for all $s \geq 0$. Theorem \ref{thm-different-of-g} confirms Komatsu's conjecture and provides a precise statement for the above mentioned phenomena.

	\begin{theorem}\label{thm-main-formula}
		The $\mathtt{g}(t_{n}, t_{n+1}, t_{n+2}; s)$ are given for all $s\geq 0$ as follows:
		\begin{enumerate}[(i)]
			\item For even $n > 6\lfloor \sqrt{s+1}\rfloor-6$, we have 
			\begin{equation}\label{eq formula in Main thm}
				\mathtt{g}(t_{n}, t_{n+1}, t_{n+2}; s)
				=
				\frac{(n+1)(n+2)}{4}(q_s n + 6c_s) - 1.
			\end{equation}
			\item 
			For odd $n > 6\bigg\lfloor \frac{\sqrt{4s+5}-1}{2}\bigg\rfloor-3$, we have 
			\begin{equation} 
            \label{eq formula in Main thm odd}
				\mathtt{g}(t_{n}, t_{n+1}, t_{n+2}; s)
				=
				\frac{(n+1)(n+2)}{4}(q_s n + 6c_s - 3 \delta_s) - 1.
			\end{equation}
		\end{enumerate}
		Here the $q_s, c_s$ and $\delta_s$ are given by 
		\begin{align*}
			q_s = 
			2\lfloor \sqrt{s} \rfloor + 2 + \delta_s,
			\quad c_s =s - \lfloor \sqrt{s} \rfloor^2-\delta_s \lfloor \sqrt{s} \rfloor, \qquad
			 \delta_s =
			\begin{cases}
				1,
				&\text{ if } s \geq \lfloor \sqrt{s} \rfloor^2+\lfloor \sqrt{s} \rfloor,
				\\
				0,
				&\text{otherwise}.
			\end{cases}
		\end{align*}
	\end{theorem}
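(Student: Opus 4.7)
My plan is to analyze the generalized Apéry set of $S = \langle t_n, t_{n+1}, t_{n+2}\rangle$ with respect to the smallest generator $t_n$. For each residue $r \in \{0, 1, \ldots, t_n - 1\}$, set
\[
\pi_r^{(s)} := \min\{ m \geq 0 : m \equiv r \pmod{t_n} \text{ and } \operatorname{d}(m;A) \geq s+1\}.
\]
Because adjoining one more copy of $t_n$ to a non-negative representation produces another, $\operatorname{d}(\cdot;A)$ is non-decreasing along each residue class mod $t_n$, so
\[
\mathtt{g}(t_n, t_{n+1}, t_{n+2};s) + t_n = \max_{0 \leq r < t_n} \pi_r^{(s)}.
\]
The whole theorem thus reduces to computing $\pi_r^{(s)}$ in the extremal residue class.

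The first step is to parametrize representations explicitly. Using $t_{n+1}-t_n=n+1$, $t_{n+2}-t_{n+1}=n+2$ and the crucial relation $t_n + t_{n+2} = 2t_{n+1}+1$, I would write down a $\mathbb{Z}$-basis of the kernel lattice $K = \{(a,b,c) \in \mathbb{Z}^3 : at_n + bt_{n+1} + ct_{n+2} = 0\}$. This basis is parity-dependent: for even $n$ one may take $((n+2)/2,\,-n/2,\,0)$ and $(0,\,-(n+3),\,n+1)$, while for odd $n$ one may take $(n+2,\,-n,\,0)$ and $(0,\,(n+3)/2,\,-(n+1)/2)$ (primitivity in each case is checked by computing a cross product equal to $(t_n,t_{n+1},t_{n+2})$). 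Any two representations of a fixed $m$ differ by an element of $K$, so $\operatorname{d}(m;A)$ equals the number of lattice points in the intersection of an affine translate of $K$ with $\mathbb{Z}^3_{\geq 0}$ — equivalently, the number of $\mathbb{Z}^2$-points inside an explicit triangular region whose vertices depend linearly on $m$.

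The next step is to locate the extremal residue class. The divisibility pattern of the conjectured answer, together with direct checks in small cases, indicates that for even $n$ the maximum is attained at $r^* = t_n - 1$ (so $\mathtt{g}(A;s) \equiv -1 \pmod{t_n}$), and for odd $n$ at an analogous shifted residue dictated by the $(-3\delta_s)$ correction term. Writing $m = t_n k - 1$ and reducing modulo $t_n$ gives the congruence $(n+1)(y+2z) + z \equiv -1 \pmod{t_n}$; lifting its solutions and imposing the non-negativity constraint $t_n x + t_{n+1} y + t_{n+2} z = t_n k - 1$ expresses $\operatorname{d}(t_n k - 1; A)$ as a piecewise-quadratic function of $k$. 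The smallest $k$ for which this count reaches $s+1$ then yields $\pi_{r^*}^{(s)}$, and I would verify that it equals $M_s + t_n - 1$, where $M_s$ denotes the right-hand side of \eqref{eq formula in Main thm} or \eqref{eq formula in Main thm odd}. The competing residues $r \neq r^*$ are controlled by bounding $\pi_r^{(s)} < \pi_{r^*}^{(s)}$, and this is where the hypotheses $n > 6\lfloor\sqrt{s+1}\rfloor - 6$ and $n > 6\lfloor(\sqrt{4s+5}-1)/2\rfloor - 3$ are needed: they guarantee that for $n$ sufficiently large relative to $s$, the extremum genuinely occurs at $r^*$ rather than being captured by some other class.

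The main technical obstacle is matching the lattice-point count at $r^*$ to the closed form built from $q_s = 2\lfloor\sqrt{s}\rfloor + 2 + \delta_s$ and $c_s = s - \lfloor\sqrt{s}\rfloor^2 - \delta_s\lfloor\sqrt{s}\rfloor$. The $\lfloor\sqrt{s}\rfloor$ appears because the number of representations at the extremum grows like the area of a triangle of side $\asymp \sqrt{s}$, so the extremal $k$ advances in bands indexed by consecutive integer squares; the indicator $\delta_s$ separates the regimes $s \in [q^2,\,q^2+q)$ and $s \in [q^2+q,\,(q+1)^2)$, corresponding to whether the last lattice point completing the count sits on the ``long diagonal'' of the triangle or one row past it. A careful case analysis on this position, carried out separately for even and odd $n$ using the respective kernel bases, produces the precise coefficients $q_s$ and $c_s$ as well as the $-3\delta_s$ correction in the odd-$n$ formula.
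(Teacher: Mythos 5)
Your overall framework (the generalized Ap\'ery--set identity $\mathtt{g}(A;s)+t_n=\max_r \pi_r^{(s)}$, justified by monotonicity of $\operatorname{d}$ along residue classes mod $t_n$) is sound, but the step where you locate the extremal residue class is wrong, and it is load-bearing. You claim that for even $n$ the maximum occurs at $r^*=t_n-1$, i.e.\ $\mathtt{g}(t_n,t_{n+1},t_{n+2};s)\equiv -1 \pmod{t_n}$. Writing $n=2m$, the asserted value satisfies
\begin{equation*}
\frac{(n+1)(n+2)}{4}\bigl(q_s n+6c_s\bigr)=(2m+1)(m+1)(q_s m+3c_s)\equiv 3c_s(2m+1)\pmod{m(2m+1)},
\end{equation*}
so $\mathtt{g}+1\equiv 3c_s(2m+1)\pmod{t_n}$, which is nonzero whenever $c_s\neq 0$ and $n$ is large relative to $s$. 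Concretely, for $s=3$ ($q_3=5$, $c_3=1$) and $n=10$ one gets $\mathtt{g}+1=1848\equiv 33\pmod{55}$, so $\mathtt{g}\equiv 32$, not $54$. The ``$-1$'' in the formula is not a congruence modulo $t_n$ (it comes from $-t_{n+1}/d_1-t_{n+2}/d_1$ after rescaling), so the residue class you propose to analyze is not the extremal one except when $c_s=0$, and the subsequent congruence/lattice-count computation would be carried out in the wrong class.

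Beyond that, the parts that would constitute the actual proof are only announced, not performed: the piecewise-quadratic count of $\operatorname{d}(t_nk-1;A)$, the matching of its threshold to $q_s,c_s,\delta_s$, and the comparison $\pi_r^{(s)}<\pi_{r^*}^{(s)}$ for all competing residues under the hypotheses $n>6\lfloor\sqrt{s+1}\rfloor-6$ and $n>6\lfloor(\sqrt{4s+5}-1)/2\rfloor-3$ are each stated as goals rather than established, and it is exactly there that the square-root structure of $q_s,c_s,\delta_s$ has to emerge. For contrast, the paper avoids the three-generator lattice count altogether: since $d_1=\gcd(t_{n+1},t_{n+2})$ is large, Beck--Kifer's reduction gives $\mathtt{g}(t_n,t_{n+1},t_{n+2};s)=d_1\,\mathtt{g}(t_n,t_{n+1}/d_1,t_{n+2}/d_1;s)+t_n(d_1-1)$, and then $\operatorname{d}(m;t_n,t_{n+1}/d_1,t_{n+2}/d_1)=\sum_j\operatorname{d}(m-jt_n;t_{n+1}/d_1,t_{n+2}/d_1)$ together with the exact two-variable formula $\mathtt{g}(a,b;x)=(x+1)ab-a-b$ lets one prove by induction on $k$ (with $s=k(k+1)+i$) that the candidate value $\mathtt{g}(t_{n+1}/d_1,t_{n+2}/d_1;x_s)+y_st_n$ has exactly $s$ representations and is maximal; this is where the bounds on $n$ enter as simple linear inequalities. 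If you want to salvage your route you would need to first determine the correct extremal residue (which depends on $s$ through $c_s$, and on the parity of $n$) and then genuinely carry out the band-by-band count, which amounts to redoing the paper's induction in a heavier coordinate system.
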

    
		For $s \geq 0$, we write for the bounds appearing in above theorem
\begin{equation}\label{definition N}
    N_s^{even} 
			\coloneqq 6\lfloor \sqrt{s+1}\rfloor-6 \qquad \text{ and } \qquad N_s^{odd} 
			\coloneqq
			6\bigg\lfloor \frac{\sqrt{4s+5}-1}{2}\bigg\rfloor-3.
\end{equation}
		
   We also define $$\mathbb{B} = \{n \in \mathbb{N} \mid n = k^2 \text{ or } n = k(k+1) \text{ for some } k \geq 1\} = \{1,2,4,6,9,12,16,\ldots\}.$$
	We will see later that if $s \notin \mathbb{B}$, then the condition of $n$ in Theorem \ref{thm-main-formula} becomes $n \geq N_{s}^{even}$ and $n \geq N_{s}^{odd}$.
	\begin{table}[h]
		\renewcommand{\arraystretch}{1.3}
		\centering
		\begin{tabular}{|c|c|c|c|c|c|c|c|c|c|c|c|c|c|c|c|c|c|c|c|c|c|}
			\hline
			$s$& 0 & 1 & 2 & 3 & 4 & 5 & 6 & 7 & 8 & 9 & 10 & 11 & 12 & 13 & 14 & 15 & 16 & 17 & 18 & 19 & 20 \\
			\hline
			
			$q_{s}$& 3 & 4 & 5 & 5 & 6 & 6 & 7 & 7 & 7 & 8 & 8 & 8 & 9 & 9 & 9 & 9 & 10 & 10	& 10 & 10 & 11\\
			
			$c_{s}$& 0 & 0 & 0 & 1 & 0 & 1 & 0 & 1 & 2 & 0 & 1 & 2 & 0 & 1 & 2 & 3 & 0 & 1 & 2 & 3 & 0\\
			
			$\delta_{s}$& 1 & 0 & 1 & 1 & 0 & 0 & 1 & 1 & 1 & 0 & 0 & 0 & 1 & 1 & 1 & 1 & 0 & 0 & 0 & 0 & 1\\
			\hline
		\end{tabular}
        \vspace{0.25em}
        \caption{$q_{s}$, $c_{s}$ and $\delta_{s}$ when $s = 0,\ldots,20$}
	\end{table}
	

    As a consequence of Theorem \ref{thm-main-formula} we will prove the following result, which gives a proof of Komatsu's conjecture.
	\begin{theorem}\label{thm-different-of-g} 
		Let $s ,n\in \mathbb{Z}_{\geq 0}$. Suppose that $n>N_{s+1}^{even}$ if $n$ is even and $n>N_{s+1}^{odd}$ if $n$ is odd. Then the following statements hold:
		\begin{enumerate}[(i)]
			\item If $s+1 \not\in \mathbb{B}$,  we have 
			\begin{equation*}
				\mathtt{g} (t_{n}, t_{n+1}, t_{n+2}; s+1)-		\mathtt{g} (t_{n}, t_{n+1}, t_{n+2}; s)
				= \frac{6(n+1)(n+2)}{4}.
			\end{equation*}
			
			\item If $n$ is even and $s+1 \in \mathbb{B}$, that is $ s+1 = k^{2}$ or $ s+1 =k(k+1)$  $(\exists k \geq 1)$, then
			\begin{equation*}
				\mathtt{g} (t_{n}, t_{n+1}, t_{n+2}; s+1)-		\mathtt{g} (t_{n}, t_{n+1}, t_{n+2}; s)
				= \frac{(n - 6k + 6)(n+1)(n+2)}{4}. 
			\end{equation*}
			
			\item If $n$ is odd and $s+1 \in \mathbb{B}$, then
			\begin{equation*}
				\mathtt{g} (t_{n}, t_{n+1}, t_{n+2}; s+1)-		\mathtt{g} (t_{n}, t_{n+1}, t_{n+2}; s)
				=
				\begin{cases}
					\frac{(n - 6k + 9)(n+1)(n+2)}{4}	&\text{ if } s+1 = k^{2},
					\\
					\frac{(n - 6k + 3)(n+1)(n+2)}{4}	&\text{ if } s+1 = k(k+1).
				\end{cases}
			\end{equation*}
		\end{enumerate}
	\end{theorem}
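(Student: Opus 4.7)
The plan is to deduce Theorem~\ref{thm-different-of-g} directly from Theorem~\ref{thm-main-formula} by subtracting the explicit formulas at $s$ and $s+1$ and carrying out a short case analysis on the position of $s+1$ relative to $\mathbb{B}$.

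First I would verify that the hypothesis is strong enough to apply Theorem~\ref{thm-main-formula} at both levels. Since $s \mapsto \lfloor \sqrt{s+1} \rfloor$ and $s \mapsto \lfloor \frac{\sqrt{4s+5}-1}{2}\rfloor$ are non-decreasing, the bounds in \eqref{definition N} satisfy $N_s^{even} \leq N_{s+1}^{even}$ and $N_s^{odd} \leq N_{s+1}^{odd}$, so the assumption $n > N_{s+1}^{even}$ (respectively $n > N_{s+1}^{odd}$) also forces $n > N_s^{even}$ (respectively $n > N_s^{odd}$). Taking the difference of \eqref{eq formula in Main thm} at the two levels, the $-1$ terms cancel and for even $n$ I obtain
\[
\mathtt{g}(t_n,t_{n+1},t_{n+2};s+1) - \mathtt{g}(t_n,t_{n+1},t_{n+2};s) = \frac{(n+1)(n+2)}{4}\bigl[(q_{s+1}-q_s)\,n + 6(c_{s+1}-c_s)\bigr],
\]
while for odd $n$ an extra term $-3(\delta_{s+1}-\delta_s)$ appears inside the brackets, coming from \eqref{eq formula in Main thm odd}. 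The problem then reduces entirely to computing the three increments $\Delta q := q_{s+1}-q_s$, $\Delta c := c_{s+1}-c_s$ and $\Delta \delta := \delta_{s+1}-\delta_s$.

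The core observation is that the piecewise definitions of $q_s$, $c_s$ and $\delta_s$ only change behavior at $s = k^2$ and $s = k^2+k$, which are precisely the elements of $\mathbb{B}$. Accordingly I would split into three cases. If $s+1 \notin \mathbb{B}$, then $\lfloor \sqrt{s+1} \rfloor = \lfloor \sqrt{s} \rfloor$ and $\delta_{s+1} = \delta_s$, so $\Delta q = \Delta \delta = 0$ and $\Delta c = 1$; substituting yields the uniform difference $\frac{6(n+1)(n+2)}{4}$ for both parities, proving (i). If $s+1 = k^2$, then $\lfloor\sqrt{s}\rfloor = k-1$ with $s = k^2-1 \geq (k-1)^2+(k-1)$, so $\delta_s = 1$, while $\lfloor\sqrt{s+1}\rfloor = k$ and $k^2 < k^2+k$ give $\delta_{s+1} = 0$; a direct computation from the defining formulas then yields $\Delta q = 1$, $\Delta c = 1-k$, $\Delta \delta = -1$. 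If $s+1 = k(k+1)$, then $\lfloor\sqrt{\cdot}\rfloor$ does not jump but the threshold for $\delta$ is crossed, giving $\Delta q = 1$, $\Delta c = 1-k$, $\Delta \delta = +1$. Substituting these increments into the displayed difference formula reproduces cases (ii) and (iii), and the parity-dependent $-3\Delta\delta$ term accounts exactly for the split in (iii) between $s+1 = k^2$ and $s+1 = k(k+1)$.

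The proof is essentially a bookkeeping exercise, and I do not expect any substantial obstacle once Theorem~\ref{thm-main-formula} is in hand. The only delicate point is to evaluate carefully the inequality $s \geq \lfloor\sqrt{s}\rfloor^2 + \lfloor\sqrt{s}\rfloor$ at both endpoints $s = k^2$ and $s = k^2+k$ defining $\mathbb{B}$; but this is routine once one observes that $\mathbb{B}$ is precisely the set of discontinuity points of the triple $(q_s, c_s, \delta_s)$.
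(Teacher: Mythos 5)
Your proposal is correct and follows essentially the same route as the paper: both deduce the result by subtracting the formulas of Theorem~\ref{thm-main-formula} at levels $s$ and $s+1$ and computing the increments $q_{s+1}-q_s$, $c_{s+1}-c_s$, $\delta_{s+1}-\delta_s$ in the three cases $s+1\notin\mathbb{B}$, $s+1=k^2$, $s+1=k(k+1)$, obtaining the same values ($0,1,0$; $1,1-k,-1$; $1,1-k,+1$ respectively). Your explicit remark that $N_s^{even}\leq N_{s+1}^{even}$ and $N_s^{odd}\leq N_{s+1}^{odd}$, so that the hypothesis $n>N_{s+1}$ also licenses the formula at level $s$, is a point the paper uses only implicitly.
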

	The proof of Theorem \ref{thm-main-formula} and Theorem \ref{thm-different-of-g} are given in Section \ref{section-proof-thm-cor}.
	
	When $s=0$, we have $q_{s} = 3, c_{s} = 0$ and $ \delta_{0} = 1$. Our result comes back to the formulas in \cite{Roble_Rosales}. For $s = 1, \ldots, 10$, a formula for $\mathtt{g}(t_{n}, t_{n+1}, t_{n+2}; s)$ is given by \cite{Komatsu-triangular}.
	\begin{corollary}
		We have 
		\begin{align*}
                \mathtt{g} (t_{n}, t_{n+1}, t_{n+2}; 11)
			&=
                \frac{(n+1)(n+2)}{4}(8n+12) -1,
				\quad\text{for } n = 12 \text{ and } n\geq 14. 
			\\
			\mathtt{g} (t_{n}, t_{n+1}, t_{n+2}; 12)
			&=
			\begin{cases}
				\frac{(n+1)(n+2)}{4}(9n) -1,
				&\text{for even } n \geq 14,
				\\
				\frac{(n+1)(n+2)}{4}(9n-3) -1, 
                    &\text{for odd } n \geq 17,
			\end{cases}
			\\
			\mathtt{g} (t_{n}, t_{n+1}, t_{n+2}; 13)
			&=
			\begin{cases}
				\frac{(n+1)(n+2)}{4}(9n+6) -1, 
				&\text{for even } n\geq 12,
				\\
				\frac{(n+1)(n+2)}{4}(9n+3) -1,
                    &\text{for odd } n\geq 15,
			\end{cases}
			\\
			\mathtt{g} (t_{n}, t_{n+1}, t_{n+2}; 14)
			&=
			\begin{cases}
				\frac{(n+1)(n+2)}{4}(9n+12) -1, 
				&\text{for even } n\geq 12,
				\\
				\frac{(n+1)(n+2)}{4}(9n+9) -1,
                     &\text{for odd } n\geq 15,
			\end{cases}
			\\
			\mathtt{g} (t_{n}, t_{n+1}, t_{n+2}; 15)
			&=
			\begin{cases}
				\frac{(n+1)(n+2)}{4}(9n+18) -1,
				&\text{for even } n\geq 18,
				\\
				\frac{(n+1)(n+2)}{4}(9n+15) -1,
                    &\text{for odd } n\geq 15,
			\end{cases}
                \\
			\mathtt{g} (t_{n}, t_{n+1}, t_{n+2}; 16)
			&=
                \frac{(n+1)(n+2)}{4}(10n) -1,
				\quad\text{for } n= 17 \text{ and } n\geq 19,
                \\
			\mathtt{g} (t_{n}, t_{n+1}, t_{n+2}; 17)
			&=
                \frac{(n+1)(n+2)}{4}(10n+6) -1,
				\quad\text{for } n = 15 \text{ and } n\geq 17.
		\end{align*}
	\end{corollary}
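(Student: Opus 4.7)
The strategy is a direct specialization of Theorem \ref{thm-main-formula}. For each $s \in \{11,\ldots,17\}$ I will compute $\lfloor \sqrt{s}\rfloor$, test whether $s \geq \lfloor \sqrt{s}\rfloor^{2} + \lfloor \sqrt{s}\rfloor$ in order to read off $\delta_s$, and then record the corresponding $q_s$ and $c_s$ from their defining formulas; the resulting triples may be taken straight from the table displayed in the introduction. Substituting into \eqref{eq formula in Main thm} and \eqref{eq formula in Main thm odd} immediately produces the claimed closed forms. Note that $\delta_s = 0$ exactly for $s \in \{11, 16, 17\}$, which explains why in those three rows the formula is stated uniformly in the parity of $n$: the even and odd expressions then coincide.

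Next I will determine the admissible ranges of $n$ by computing the thresholds $N_s^{even}$ and $N_s^{odd}$ from \eqref{definition N}. Across $s = 11, \ldots, 17$ these thresholds take only the values $12$, $15$, and $18$, and Theorem \ref{thm-main-formula} immediately covers all $n$ strictly above the appropriate threshold. Comparing against the bounds stated in the corollary leaves a small number of exceptional $n$ sitting just below the threshold---for instance $n = 12$ at $s \in \{11, 13, 14\}$, $n = 15$ at $s \in \{11, 13, 14, 15, 17\}$, and $n = 18$ at $s \in \{15, 17\}$---which the corollary still claims satisfy the formula.

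For each such exceptional pair I will verify the formula by a direct computation of $\operatorname{d}(\,\cdot\,; t_n, t_{n+1}, t_{n+2})$ at the conjectured Frobenius value and at the next several integers above it, either by extracting coefficients from the generating series $\bigl((1-x^{t_n})(1-x^{t_{n+1}})(1-x^{t_{n+2}})\bigr)^{-1}$ or, more efficiently, by reading off the Apéry set of the least generator $t_n$. The principal obstacle is thus not conceptual but one of careful bookkeeping: one must confirm both that the exceptional $n$ listed in each row actually do satisfy the formula and that the implicitly excluded values (such as $n = 13$ at $s = 11$, or $n = 16$ at $s = 17$) do not, so that the stated ranges are sharp. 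After these finitely many verifications the corollary follows at once from Theorem \ref{thm-main-formula}.
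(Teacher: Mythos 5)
Your specialization of Theorem \ref{thm-main-formula} is correct: the triples $(q_s,c_s,\delta_s)$ for $s=11,\dots,17$, the thresholds $N_s^{even},N_s^{odd}\in\{12,15,18\}$, and the observation that the rows $s=11,16,17$ (where $\delta_s=0$) are exactly the parity-independent ones all check out, and your bookkeeping of which $(s,n)$ pairs sit on the boundary is accurate. Where you diverge from the paper is in how those boundary values $n=N_s$ are handled. The paper needs no case-by-case verification: it records (in the introduction, and in the remarks after Lemma \ref{lem-main leamma for thm 1} and after the proof of Theorem \ref{thm-g(tn,tn+1,tn+2;s)}) that whenever $s\notin\mathbb{B}$, i.e.\ $i\neq 0,k+1$ in $s=k(k+1)+i$, Lemma \ref{lem-main leamma for thm 1}, Theorem \ref{thm: g(t1,t2d1,t3d1) = g(xs)+ystn} and Theorem \ref{thm-g(tn,tn+1,tn+2;s)} all hold with the non-strict bounds $n\geq N_s^{even}$ and $n\geq N_s^{odd}$ (the strict inequalities in the induction are only forced when $i=0$ or $i=k+1$). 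Since $11,13,14,15,17\notin\mathbb{B}$, every one of your ``exceptional'' pairs ($n=12$ for $s\in\{11,13,14\}$; $n=15$ for $s\in\{11,13,14,15,17\}$; $n=18$ for $s\in\{15,17\}$) is covered by this remark, while for $s=12,16\in\mathbb{B}$ the strict bounds already yield the stated ranges. So the corollary follows from the theorem with no computation at all. If you nevertheless verify the boundary pairs numerically, note two cautions: checking $\operatorname{d}$ at the candidate value ``and at the next several integers above it'' does not certify that the candidate is the largest integer with at most $s$ representations --- you would need the per-residue-class (Ap\'ery-set) computation modulo $t_n$, with $t_n$ as large as $t_{18}=171$, which is a genuine machine check; and confirming that excluded values such as $n=13$ at $s=11$ fail the formula is not required, since the corollary asserts validity on the stated ranges, not sharpness of those ranges.
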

	  
	\vspace{0.5cm}
	
	{\bf Acknowledgement:} 
    This project was supported by the Development and Promotion of Science and Technology Talents Project (DPST), Thailand. I sincerely appreciate the invaluable guidance and support of my supervisors, Prof. Henrik Bachmann and Prof. Kohji Matsumoto. I would also like to thank Prof. Takao Komatsu for his insightful comments.

	\section{A reformulation and preliminary Lemmas}
	In this section, we give a reformulation of Theorem \ref{thm-main-formula}, which is Theorem \ref{thm-g(tn,tn+1,tn+2;s)}. To facilitate the reformulation, we introduce the following variables: $x_{s}^{even}$, $y_{s}^{even}$, $x_{s}^{odd}$, and $y_{s}^{odd}$.
	
	\begin{definition}\label{definition x,y}
		Let $s$ be a non-negative integer and let $k$ be the non-negative integer such that $s = k(k+1) + i,$ for some $0 \leq i \leq 2k+1$. Then we define integers $x_{s}^{even}$, $y_{s}^{even}$, $x_{s}^{odd}$, and $y_{s}^{odd}$ as follows:
		\begin{align*}
			\big(x_{s}^{even}, y_{s}^{even}\big)
			&=
			\begin{cases}
				\big(i, 2(k-i)\big), &\text{ if } 0 \leq i \leq k,
				\\
				\big(i-k-1, 4k-2i+3\big), &\text{ if } k+1 \leq i \leq 2k+1,
			\end{cases}
			\\
			\big(x_{s}^{odd}, y_{s}^{odd}\big)
			&=
			\begin{cases}
				\big(2i, k-i\big), &\text{ if } 0 \leq i \leq k,
				\\
				\big(2(i-k)-1, 2k-i+1\big), &\text{ if }  k+1 \leq i \leq 2k+1,
			\end{cases}
		\end{align*}
	\end{definition}
		With the same parameters as defined in Definition \ref{definition x,y}, we can express $N_{s}^{even}$ and $N_{s}^{odd}$, introduced in \eqref{definition N}, as follows:
        \begin{equation*}
            N_s^{even} 
			=
			\begin{cases}
				6k - 6,  &\text{ if }  0 \leq i \leq k-1,
				\\
				6k,  &\text{ if } k \leq i \leq 2k+1,
			\end{cases}
            \qquad \text{ and } \qquad
            N_s^{odd} 
			=
			\begin{cases}
				6k - 3,  &\text{ if } 0 \leq i \leq 2k,
				\\
				6k+3,  &\text{ if } i = 2k+1.
			\end{cases}
        \end{equation*}
	
    The values of integers $x_{s}^{even}$, $y_{s}^{even}$, $x_{s}^{odd}$, and $y_{s}^{odd}$ are presented in Table \ref{tabel x,y}, while the values for $N_{s}^{even}$ and $N_{s}^{odd}$ can be found in Table \ref{tabel N}.
 
	\begin{table}
		\label{tabel x,y}
		\renewcommand{\arraystretch}{1.3}
		\centering
		\begin{tabular}{|c|c|c|c|c|c|c|c|c|c|c|c|c|c|c|c|c|c|c|c|c|}
			\hline
			$s$ & 0 & 1 & 2 & 3 & 4 & 5 & 6 & 7 & 8 & 9 & 10 & 11 & 12 & 13 & 14 & 15 & 16 & 17 & 18 & 19 \\
			\hline
			$x_{s}^{even}$ & 0 & 0 & 0 & 1 & 0 & 1 & 0 & 1 & 2 & 0 & 1 & 2 & 0 & 1 & 2 & 3 & 0 & 1 & 2 & 3 \\
			
			$y_{s}^{even}$ & 0 & 1 & 2 & 0 & 3 & 1 & 4 & 2 & 0 & 5 & 3 & 1 & 6 & 4 & 2 & 0 & 7 & 5 & 3 & 1\\
			\hline
			$x_{s}^{odd}$ & 0 & 1 & 0 & 2 & 1 & 3 & 0 & 2 & 4 & 1 & 3 & 5 & 0 & 2 & 4 & 6 & 1 & 3 & 5 & 7\\
			
			$y_{s}^{odd}$ & 0 & 0 & 1 & 0 & 1 & 0 & 2 & 1 & 0 & 2 & 1 & 0 & 3 & 2 & 1 & 0 & 3 & 2 & 1 & 0\\
			\hline
		\end{tabular}
        \vspace{0.25em}
        \caption{$x_{s}^{even}$, $y_{s}^{even}$, $x_{s}^{odd}$ and $y_{s}^{odd}$ when $s = k(k+1) + i$ for $0 \leq i \leq 2k+1$}
	\end{table}
	\begin{table}
		\renewcommand{\arraystretch}{1.3}
		\centering
		\begin{tabular}{|c|c|c|c|c|}
			\hline
			$i$ & $ 0,1,\ldots,k-1 $ & $k$ & $k+1, \ldots, 2k$ & $2k+1$ \\
			\hline
			$N_{s}^{even}$ & $6k-6$ & $6k$ & $6k$ & $6k$ \\
			\hline
			$N_{s}^{odd}$ & $6k-3$ & $6k-3$ & $6k-3$ & $6k+3$ \\
			\hline
		\end{tabular}
	\vspace{0.25em}
        \caption{$N_{s}^{even}$ and $N_{s}^{odd}$ for $k\geq 1$ when $s = k(k+1) + i$ for $0 \leq i \leq 2k+1$}
        \label{tabel N}
        \end{table}
	
	\begin{theorem}\label{thm-g(tn,tn+1,tn+2;s)}
		Let $s$ be a non-negative integer. If $s = 0,1,2$ or $s \notin \mathbb{B}$, then for integer $n\geq 2$
		\begin{equation*}
			\mathtt{g} (t_{n}, t_{n+1}, t_{n+2}; s)
			=
			\begin{cases}
				\frac{(n+1)(n+2)}{4}\Big((2x_{s}^{even} + y_{s}^{even} +3)n + 6x_{s}^{even}\Big) -1, 
				&\text{for even } n > N^{even}_{s},
				\\
				\frac{(n+1)(n+2)}{4}\Big((x_{s}^{odd} + 2y_{s}^{odd} +3)n + 3(x_{s}^{odd}-1)\Big) -1, &\text{for odd } n > N^{odd}_{s}.
			\end{cases}
		\end{equation*}
	\end{theorem}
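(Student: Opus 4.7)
The plan is to prove this reformulation directly; Theorem~\ref{thm-main-formula} then follows by a routine algebraic identification of the two parametrizations of $s$. The advantage of the $(x_s,y_s)$ variables is that they record the geometric position of the $(s+1)$-th representation of the critical integer, whereas $(q_s,c_s,\delta_s)$ encodes the same data only implicitly through floor functions.

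First, I would adopt the Ap\'ery-set approach relative to $t_n$: for each residue $r \pmod{t_n}$, let $w(r;s)$ be the smallest integer $m \equiv r \pmod{t_n}$ with $\operatorname{d}(m;t_n,t_{n+1},t_{n+2}) \geq s+1$; then $\mathtt{g}(t_n,t_{n+1},t_{n+2};s) = \max_r w(r;s) - t_n$. A representation $\alpha t_n + \beta t_{n+1} + \gamma t_{n+2} = m$ with $\alpha,\beta,\gamma \geq 0$ is determined by a non-negative lattice point $(\beta,\gamma)$ satisfying $\beta t_{n+1} + \gamma t_{n+2} \leq m$ and $(n+1)\beta + (2n+3)\gamma \equiv m \pmod{t_n}$, using the congruences $t_{n+1} \equiv n+1$ and $t_{n+2} \equiv 2n+3 \pmod{t_n}$. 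Because $\gcd(n+1,t_n)$ equals $n+1$ when $n$ is even but only $(n+1)/2$ when $n$ is odd, the admissible residues and the shape of the resulting lattice region differ in the two cases, which is the origin of the two-case statement.

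Next, I would show that in each residue class the representations correspond to lattice points in a triangle that grows linearly with $m$, and $w(r;s)$ is reached precisely when this triangle contains $s+1$ such points. Writing $s = k(k+1)+i$ with $0 \leq i \leq 2k+1$ partitions $s$ according to the triangle's size $k$ and the position $i$ of the last boundary point, leading to the four sub-cases of Definition~\ref{definition x,y}. Substituting these into the Ap\'ery-set formula then produces the claimed expressions. The bounds $n > N_s^{even}$ and $n > N_s^{odd}$ are exactly what ensures that the triangle fits inside $\mathbb{Z}_{\geq 0}^2$ without one of its sides slipping out of the non-negative quadrant, a degeneration that would distort the point count.

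The main obstacle will be the careful bookkeeping of lattice counts across the four sub-cases and the verification that $N_s^{even}$ and $N_s^{odd}$ are tight thresholds. The restriction $s = 0,1,2$ or $s \notin \mathbb{B}$ is forced because, when $s \in \mathbb{B}$ with $s \geq 3$, the last point sits at a corner ($i=k$ or $i=2k+1$) and additional small-$n$ cases may accidentally satisfy the formula, requiring separate analysis; these exceptional $n$ are absorbed into the proof of Theorem~\ref{thm-main-formula} itself. Once the reformulation is established, Theorem~\ref{thm-main-formula} follows by verifying $q_s = 2x_s^{even} + y_s^{even} + 3$, $c_s = x_s^{even}$, and the analogous identities on the odd side, each a routine computation of $\lfloor\sqrt{s}\rfloor$ and $\delta_s$ on the two sub-ranges of $i$.
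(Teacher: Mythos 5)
Your setup is legitimate and genuinely different from the paper's route: the identity $\mathtt{g}(t_n,t_{n+1},t_{n+2};s)=\max_r w(r;s)-t_n$ is a correct generalization of the Ap\'ery-set description (adding $t_n$ can only preserve or increase the representation count), and your parametrization of representations by lattice points $(\beta,\gamma)\in\mathbb{Z}_{\geq 0}^2$ with $(n+1)\beta+(2n+3)\gamma\equiv m \pmod{t_n}$, together with the parity dichotomy coming from $\gcd(n+1,t_n)$, is accurate. The paper instead never works modulo $t_n$: it first divides out $d_1=\gcd(t_{n+1},t_{n+2})$ via Lemma \ref{lemma-from-Beck-Kifer}, then proves Theorem \ref{thm: g(t1,t2d1,t3d1) = g(xs)+ystn} by showing (Lemma \ref{lem-main leamma for thm 1}, an eight-case induction on $k$ in $s=k(k+1)+i$ driven by the exact two-variable formula and the sandwich criterion of Lemma \ref{lemma: d(g_s-jc) = i iff g_i-1 < g_s-jc <= g_i}) that the candidate $\mathtt{g}(t_{n+1}/d_1,t_{n+2}/d_1;x_s)+y_st_n$ has exactly $s$ representations, and uses the monotonicity statement of Lemma \ref{lemma for optimal m} to rule out anything larger.

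The problem is that, past the setup, your argument is asserted rather than proved, and the missing part is precisely the content of the theorem. You never compute $w(r;s)$ for the residue classes, never identify which class attains the maximum, and never show that the maximum equals the closed form built from the specific pairs $(x_s^{even},y_s^{even})$ and $(x_s^{odd},y_s^{odd})$ of Definition \ref{definition x,y} exactly when $n>N_s^{even}$ (resp.\ $n>N_s^{odd}$). The sentences ``substituting these into the Ap\'ery-set formula then produces the claimed expressions'' and ``the bounds are exactly what ensures that the triangle fits inside $\mathbb{Z}_{\geq 0}^2$'' stand in for an argument that, in any execution I can see, requires case analysis of the same order of magnitude as the paper's induction (or as Komatsu's explicit Ap\'ery-set computations for $s\leq 10$): one must track how the congruence-constrained lattice points accumulate as $m$ grows, and this is where the stratification $s=k(k+1)+i$ and the thresholds $N_s^{even},N_s^{odd}$ actually get verified. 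Two secondary inaccuracies: membership of $s$ in $\mathbb{B}$ corresponds to $i=0$ or $i=k+1$ in the parametrization $s=k(k+1)+i$, not to $i=k$ or $i=2k+1$ as you state; and the hypothesis ``$s=0,1,2$ or $s\notin\mathbb{B}$'' is not what makes the strict bounds $n>N_s^{even}$, $n>N_s^{odd}$ work (the paper's proof gives those for every $s$) --- in the paper $s\notin\mathbb{B}$ only governs whether the bounds can be relaxed to $n\geq N_s^{even}$, $n\geq N_s^{odd}$. So the plan is plausible, but the decisive counting arguments still need to be supplied before this counts as a proof.
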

	
	In the next section we will show how Theorem \ref{thm-g(tn,tn+1,tn+2;s)} implies Theorem \ref{thm-main-formula}. To prove Theorem \ref{thm-g(tn,tn+1,tn+2;s)} we will now give several lemmas.

	For a positive integer $n$, $t_{n} $ is the $n$th triangular number which is given by $t_{n} = \binom{n+1}{2} = \frac{n(n+1)}{2}.$ We also define 
	\begin{equation*}
		d_{1} \coloneqq \gcd(t_{n+1}, t_{n+2}) = 
		\begin{cases}
			\frac{n+2}{2}, 	&\text{ if } n \text{ is even,}\\
			n+2,			&\text{ if } n \text{ is odd.}
		\end{cases} 
	\end{equation*}
	Then
	\begin{center}
		$\dfrac{t_{n+1}}{d_{1}}
		=
		\begin{cases}
			n+1, 	&\text{ if } n \text{ is even,}\\
			\frac{n+1}{2},			&\text{ if } n \text{ is odd,}
		\end{cases}
		\quad$
		and
		$
		\quad
		\dfrac{t_{n+2}}{d_{1}}
		=
		\begin{cases}
			n+3, 	&\text{ if } n \text{ is even,}\\
			\frac{n+3}{2},			&\text{ if } n \text{ is odd.}
		\end{cases}
		$
	\end{center}
	
	Beck and Kifer \cite{Beck_Kifer} show the following result on $\mathtt{g}(a_{1}, a_{2}, \ldots, a_{k};s)$ in terms of $\ell = \gcd(a_{2}, a_{3}, \ldots, a_{k}) $.
	\begin{lemma}\label{lemma-from-Beck-Kifer}
		\cite[Lemma 4]{Beck_Kifer}
		For $k\geq 2$, let $A =(a_{1}, \ldots, a_{k})$ be a $k$-tuple of positive integers with $\gcd(A) = 1$. If $\ell = \gcd(a_{2}, a_{3}, \ldots, a_{k})$, let $a_{j} = \ell a_{j}'$ for $2 \leq j \leq k$. Then for $s\geq 0$
		\begin{align*}
			\mathtt{g}(a_{1}, a_{2}, \ldots, a_{k}; s) &= \ell \, \mathtt{g}\big(a_{1}, a_{2}', a_{3}', \ldots, a_{k}'; s\big) + a_{1}(\ell -1).
		\end{align*}
	\end{lemma}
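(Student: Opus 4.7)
The plan is to set up a bijection between non-negative integer representations of $n$ by $A=(a_1,\ldots,a_k)$ and representations of a related integer $q$ by $A' = (a_1, a_2', \ldots, a_k')$, and then read off $\mathtt{g}(A;s)$ from $\mathtt{g}(A';s)$ via a simple maximization over a residue class. The key preliminary observation is that $\gcd(A)=1$ combined with $\ell \mid a_j$ for all $j\geq 2$ forces $\gcd(a_1,\ell)=1$. Hence for every $n \in \mathbb{Z}_{\geq 0}$ there is a unique $r \in \{0,1,\ldots,\ell-1\}$ with $a_1 r \equiv n \pmod{\ell}$, and setting $q = (n - a_1 r)/\ell \in \mathbb{Z}$ gives the unique decomposition $n = a_1 r + \ell q$.

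The first step is to prove that $\operatorname{d}(n;A) = \operatorname{d}(q;a_1, a_2', \ldots, a_k')$ when $q \geq 0$, and $\operatorname{d}(n;A) = 0$ when $q < 0$. Any representation $n = a_1 x_1 + a_2 x_2 + \cdots + a_k x_k$ satisfies $a_1 x_1 \equiv a_1 r \pmod{\ell}$, whence $x_1 \equiv r \pmod{\ell}$, so $x_1 = r + \ell j$ for some integer $j \geq 0$. Substituting $a_j = \ell a_j'$ and dividing by $\ell$ gives
\begin{equation*}
 q = a_1 j + a_2' x_2 + \cdots + a_k' x_k,
\end{equation*}
and the assignment $(x_1, x_2, \ldots, x_k) \leftrightarrow (j, x_2, \ldots, x_k)$ is a bijection between the two sets of non-negative integer solutions. (If $q < 0$, no such $(j, x_2, \ldots, x_k)$ can exist, and correspondingly $n$ admits no representation by $A$.)

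The second step is the maximization. As $n$ ranges over $\mathbb{Z}_{\geq 0}$, the pair $(r,q)$ ranges freely over $\{0,\ldots,\ell-1\} \times \mathbb{Z}_{\geq -\lceil a_1(\ell-1)/\ell\rceil}$, and $\operatorname{d}(n;A) \leq s$ reduces to: either $q<0$, or $\operatorname{d}(q;a_1,a_2',\ldots,a_k') \leq s$. To maximize $n = a_1 r + \ell q$ under this constraint, I take $q = \mathtt{g}(a_1,a_2',\ldots,a_k';s)$ and $r = \ell - 1$, giving the candidate $n^\ast = \ell\,\mathtt{g}(a_1,a_2',\ldots,a_k';s) + a_1(\ell-1)$. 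For any $n > n^\ast$ the associated $q'$ satisfies $\ell q' \geq n - a_1(\ell-1) > \ell\,\mathtt{g}(a_1,a_2',\ldots,a_k';s)$, hence $q' > \mathtt{g}(a_1,a_2',\ldots,a_k';s)$, so $\operatorname{d}(n;A) = \operatorname{d}(q';a_1,a_2',\ldots,a_k') \geq s+1$. The one technical point requiring care is the bijection of representations in the first step; once that is established, the maximization is immediate and no further obstacle remains.
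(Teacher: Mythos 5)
Your proposal is correct, and it is worth noting that the paper itself gives no proof of this statement at all: it is quoted as Lemma 4 of Beck and Kifer and used as a black box. Your argument is the natural (and essentially the original) one: from $\gcd(A)=1$ and $\ell\mid a_j$ for $j\geq 2$ you get $\gcd(a_1,\ell)=1$, the unique decomposition $n=a_1r+\ell q$ with $0\leq r\leq \ell-1$, the bijection $x_1=r+\ell j$ giving $\operatorname{d}(n;A)=\operatorname{d}(q;a_1,a_2',\ldots,a_k')$ for $q\geq 0$ and $\operatorname{d}(n;A)=0$ for $q<0$, and then the two-sided check that $n^\ast=\ell\,\mathtt{g}(a_1,a_2',\ldots,a_k';s)+a_1(\ell-1)$ has at most $s$ representations while every $n>n^\ast$ has at least $s+1$. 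Two small points: the sentence claiming that $(r,q)$ ``ranges freely'' over a product set is not literally accurate (for small $n$ the admissible values of $q$ depend on $r$), but your actual maximization never uses it, only that $n^\ast$ decomposes with $(r,q)=(\ell-1,\mathtt{g}')$ and that $n>n^\ast$ forces $q'>\mathtt{g}'\geq -1$, hence $q'\geq 0$ and $\operatorname{d}(n;A)\geq s+1$; and the degenerate case $\mathtt{g}(a_1,a_2',\ldots,a_k';s)=-1$ is also covered, since then $q<0$ at $n^\ast$ yields $\operatorname{d}(n^\ast;A)=0\leq s$. So your write-up supplies a complete proof of a result the paper only cites, and no gap remains.
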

        Our main approach involves applying Lemma \ref{lemma-from-Beck-Kifer} to three consecutive triangular numbers $t_{n}, t_{n+1},$ and $t_{n+2}$ by setting $\ell = d_{1} \coloneqq \gcd(t_{n+1}, t_{n+2})$. For $s \geq 0$, this leads to the expression: 
        \begin{equation*}
        \mathtt{g}(t_{n}, t_{n+1}, t_{n+2}; s) = d_{1}\mathtt{g}\Big(t_{n}, \frac{t_{n+1}}{d_{1}}, \frac{t_{n+2}}{d_{1}}; s\Big) + t_{n}(d_{1}-1).
        \end{equation*}
By Theorem \ref{thm: g(t1,t2d1,t3d1) = g(xs)+ystn}, it follows that
\begin{equation*}
    \mathtt{g}\Big(t_{n}, \frac{t_{n+1}}{d_{1}}, \frac{t_{n+2}}{d_{1}}; s\Big) = \mathtt{g}\Big(\frac{t_{n+1}}{d_{1}}, \frac{t_{n+2}}{d_{1}}; x_{s}\Big) + y_{s}t_{n},
\end{equation*}
where $(x_{s}, y_{s}) = (x_{s}^{\text{even}}, y_{s}^{\text{even}})$ for even $n > N_{s}^{\text{even}}$, and $(x_{s}, y_{s}) = (x_{s}^{\text{odd}}, y_{s}^{\text{odd}})$ for odd $n > N_{s}^{\text{odd}}$. This leads directly to the formulas presented in Theorem \ref{thm-g(tn,tn+1,tn+2;s)}.

In section \ref{section-proof-thm-cor}, we establish a connection between Theorem \ref{thm-g(tn,tn+1,tn+2;s)} and our main result in Theorem \ref{thm-main-formula}. The proof of these claims involves introducing several supporting lemmas.

	\begin{lemma}\label{lemma: formula d to sum of d_tilda}
		For integers $m \geq 0$,
		\begin{equation*}
			\operatorname{d}\!\Big( m ; t_{n}, \frac{t_{n+1}}{d_1}, \frac{t_{n+2}}{d_1}\Big)
			=
			\sum_{j=0}^{\lfloor m / t_n\rfloor} \operatorname{d}\!\Big(m - jt_{n} ;\frac{t_{n+1}}{d_1}, \frac{t_{n+2}}{d_1} \Big)
		\end{equation*}
	\end{lemma}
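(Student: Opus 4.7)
The plan is to prove this identity by a direct bijective classification of representations, grouping them by the coefficient of $t_{n}$. Alternatively, one can argue purely at the level of generating series; I will outline both since each is short.

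First, I would recall that $\operatorname{d}(m;t_n,t_{n+1}/d_1,t_{n+2}/d_1)$ counts the triples $(x_1,x_2,x_3) \in \mathbb{Z}_{\geq 0}^3$ satisfying
\begin{equation*}
x_1 t_n + x_2 \tfrac{t_{n+1}}{d_1} + x_3 \tfrac{t_{n+2}}{d_1} = m.
\end{equation*}
Partition this solution set according to the value $j \coloneqq x_1$. For a fixed $j$, the remaining pairs $(x_2,x_3)$ are exactly the non-negative integer solutions of $x_2 (t_{n+1}/d_1) + x_3 (t_{n+2}/d_1) = m - j t_n$, and the number of such pairs is by definition $\operatorname{d}(m - j t_n; t_{n+1}/d_1, t_{n+2}/d_1)$. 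This gives a bijection between the full solution set and the disjoint union over $j$ of the two-variable solution sets, so the counts add.

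Next I would pin down the range of summation. Since $x_1, x_2, x_3 \geq 0$, the contribution from a given $j$ is zero as soon as $m - j t_n < 0$, i.e.\ as soon as $j > m/t_n$. Conversely for $0 \leq j \leq \lfloor m/t_n \rfloor$ we have $m - j t_n \geq 0$ and the term is the genuine two-variable representation count. Hence the sum truncates exactly at $j = \lfloor m/t_n \rfloor$, yielding the claimed formula.

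As a sanity check (and an alternative proof), the generating function factors as
\begin{equation*}
\frac{1}{(1-x^{t_n})(1-x^{t_{n+1}/d_1})(1-x^{t_{n+2}/d_1})} = \Bigl(\sum_{j \geq 0} x^{j t_n}\Bigr) \cdot \frac{1}{(1-x^{t_{n+1}/d_1})(1-x^{t_{n+2}/d_1})},
\end{equation*}
and extracting the coefficient of $x^m$ on both sides gives precisely the stated identity, with the truncation at $\lfloor m/t_n \rfloor$ automatic since $\operatorname{d}(m - j t_n; t_{n+1}/d_1, t_{n+2}/d_1) = 0$ for $j t_n > m$. There is no real obstacle here; the only point requiring care is getting the index bound right, which is handled by the non-negativity of $m - j t_n$.
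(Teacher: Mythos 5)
Your argument is correct and matches the paper's approach: the paper simply states that the identity follows immediately from the definition of $\operatorname{d}$, and your partition of the solution set by the coefficient of $t_n$ (with the generating-function check) is exactly the elaboration of that one-line proof.
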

	\begin{proof}
		It follows immediately from the definition of $\operatorname{d}\!$.
	\end{proof}
    
    The detailed proofs of the following Lemmas are discussed and presented in \cite{Subwattanachai_g_three}. 
	\begin{lemma}\label{lemma: d(g_s-jc) = i iff g_i-1 < g_s-jc <= g_i}
    \cite[Lemma 5]{Subwattanachai_g_three}
		Let $a,b\in\mathbb{Z}_{>0}$ with $\gcd(a,b) = 1$, and let $i,s \in \mathbb{Z}_{\geq0}$. Suppose that $c$ is a positive integer such that $c  \equiv 0 \pmod a$ or $c \equiv 0 \pmod b$ and $j \in \mathbb{Z}$. Then
		\begin{equation*}
			\operatorname{d}\!\big( \mathtt{g}( a, b ;s) + jc ; a,b\big) = i,
		\end{equation*}
		if and only if,
		\begin{equation*}			
			\mathtt{g}(a, b ;i-1) < \mathtt{g}( a, b ;s) + jc \leq \mathtt{g}(a, b ;i).
		\end{equation*}
		Here we set $\mathtt{g}(a, b ;-1)$ to be $-\infty$.
	\end{lemma}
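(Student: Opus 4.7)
The plan is to combine the classical two-variable closed form $\mathtt{g}(a,b;s) = (s+1)ab - a - b$ with a direct count of non-negative representations $N = ax + by$ when $N$ lies in a prescribed residue class. First, from $\mathtt{g}(a,b;i) - \mathtt{g}(a,b;i-1) = ab$ for every $i \geq 1$ together with the convention $\mathtt{g}(a,b;-1) = -\infty$, the half-open intervals $(\mathtt{g}(a,b;i-1), \mathtt{g}(a,b;i)]$ for $i \geq 0$ partition $\mathbb{Z}$. So for every integer $N$ there is a unique $i \geq 0$ placing $N$ in the $i$-th interval, and the task reduces to identifying this $i$ with $\operatorname{d}(N; a, b)$ for $N$ of the form $\mathtt{g}(a,b;s) + jc$.

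Next, by the symmetry of $\operatorname{d}$ and $\mathtt{g}$ in $a$ and $b$, I would reduce to the case $c = ma$ for some $m \geq 1$; the case $c = mb$ is obtained by interchanging $a$ and $b$. Then
\begin{equation*}
    N \;=\; \mathtt{g}(a,b;s) + jc \;\equiv\; -a - b \;\equiv\; -b \pmod{a},
\end{equation*}
so $a$ divides $N + b$. The crucial intermediate step is the explicit formula
\begin{equation*}
    \operatorname{d}(N; a, b) \;=\; \max\bigl\{0,\; \lfloor (N+b)/(ab) \rfloor\bigr\} \qquad \text{whenever } a \mid N + b.
\end{equation*}
This is a short combinatorial computation: in any non-negative solution of $N = ax + by$, the congruence $a \mid by$ combined with $\gcd(a,b) = 1$ forces $y \equiv -1 \pmod{a}$, so $y \in \{a-1,\; 2a-1,\; 3a-1, \dots\}$; then $x \geq 0$ is equivalent to $by \leq N$, and counting the admissible values of $y$ yields the displayed formula.

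To conclude, I would rewrite the interval condition $\mathtt{g}(a,b;i-1) < N \leq \mathtt{g}(a,b;i)$ as $iab - a < N + b \leq (i+1)ab - a$. Since $N + b$ is a multiple of $a$, its possible values inside this window are exactly the $b$ consecutive multiples $iab,\, iab + a,\, \dots,\, (i+1)ab - a$, each of which gives $\lfloor (N+b)/(ab) \rfloor = i$. Combining this with the formula above yields both directions of the equivalence for $i \geq 1$. The boundary case $i = 0$ is handled by inspection: the admissible $N + b$ is then a multiple of $a$ with $N + b \leq ab - a$, so either $N + b < 0$ (whence $N < 0$ and $\operatorname{d}(N) = 0$ trivially) or $N + b \in \{0,\, a,\, \dots,\, (b-1)a\}$ (whence $\lfloor (N+b)/(ab) \rfloor = 0$ and $\operatorname{d}(N) = 0$), matching $i = 0$ in both cases.

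The main technical obstacle is establishing the closed-form representation count $\operatorname{d}(N; a, b) = \max\{0, \lfloor (N+b)/(ab) \rfloor\}$ cleanly and uniformly across all integers $N$ with $a \mid N + b$, including negative values. Once this formula is secured, the lemma reduces to arithmetic bookkeeping: the hypothesis on $c$ forces $N + b$ to land inside a prescribed arithmetic progression with common difference $a$, and the window $(iab - a,\, (i+1)ab - a]$ contains exactly $b$ such points, all producing the same floor value $i$.
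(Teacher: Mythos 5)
Your argument is correct, and it is a complete, self-contained proof; note that the paper itself does not prove this lemma at all, but imports it from \cite{Subwattanachai_g_three}, so there is no internal proof to compare against. Your route is the natural one: the Beck--Robins formula $\mathtt{g}(a,b;m)=(m+1)ab-a-b$ makes the intervals $(\mathtt{g}(a,b;i-1),\mathtt{g}(a,b;i)]$ a partition of $\mathbb{Z}$ into blocks of length $ab$, the hypothesis on $c$ pins $N=\mathtt{g}(a,b;s)+jc$ to the residue class $-b \pmod a$ (after the harmless symmetry reduction to $c\equiv 0 \pmod a$), and the closed count $\operatorname{d}(N;a,b)=\max\{0,\lfloor (N+b)/(ab)\rfloor\}$ on that class identifies the interval index with the representation number; the forward implication for every $i$ plus the partition property then gives the equivalence, exactly as you say. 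Two small points to tidy up: the sentence ``the congruence $a \mid by$ \dots forces $y\equiv -1 \pmod a$'' should read $a \mid b(y+1)$ (equivalently $by\equiv N\equiv -b \pmod a$), since $a\mid by$ as written would force $y\equiv 0$; and you may wish to remark that the degenerate case $a=1$ causes no trouble, as then $\{a-1,2a-1,\dots\}=\mathbb{Z}_{\geq 0}$ and the counting formula still returns $\lfloor (N+b)/b\rfloor$ for $N\geq 0$. Neither affects the validity of the argument.
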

        \begin{lemma} \label{lemma for optimal m}
        \cite[Lemma 7]{Subwattanachai_g_three}
		Let $a,b \in \mathbb{Z}_{>0}$ with $a<b$, $\gcd(a,b) = 1$, and let $s, k \in \mathbb{Z}_{\geq 0}$. If m is an integer such that $m > \mathtt{g}(a,b;s) + ka$, then, for all $j \in \mathbb{Z}_{\geq 0}$, we have
		\begin{equation*}
			\operatorname{d}\!\big( m - ja; a,b\big) \geq \operatorname{d}\!\big(\mathtt{g}(a,b;s) + (k -j)a; a,b\big).
		\end{equation*}
	\end{lemma}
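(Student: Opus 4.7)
The plan is to reduce the claimed inequality on representation counts to a direct comparison of integers via Lemma~\ref{lemma: d(g_s-jc) = i iff g_i-1 < g_s-jc <= g_i}. Write $G := \mathtt{g}(a,b;s)$ and set $r := m - G - ka$, so that the hypothesis $m > \mathtt{g}(a,b;s) + ka$ is the same as $r \geq 1$. For every $j \in \mathbb{Z}_{\geq 0}$ we then have the convenient identity
\begin{equation*}
    m - ja = G + (k-j)a + r.
\end{equation*}
Let $i_0 := \operatorname{d}\!\big(G + (k-j)a;\, a, b\big)$, which is exactly the right-hand side of the claim.

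If $i_0 = 0$, there is nothing to prove, since $\operatorname{d}(\,\cdot\,;a,b)$ is non-negative by definition. If $i_0 \geq 1$, I would apply Lemma~\ref{lemma: d(g_s-jc) = i iff g_i-1 < g_s-jc <= g_i} with $c = a$ (trivially $a \equiv 0 \pmod a$) and integer exponent $k-j \in \mathbb{Z}$ to conclude that
\begin{equation*}
    \mathtt{g}(a, b; i_0 - 1) < G + (k-j)a.
\end{equation*}
Adding the strictly positive integer $r$ to both sides gives $\mathtt{g}(a, b; i_0 - 1) < m - ja$, and by the very definition of $\mathtt{g}(a, b; i_0 - 1)$ as the largest integer whose representation count is at most $i_0 - 1$, this forces $\operatorname{d}(m - ja; a, b) \geq i_0$, which is the desired conclusion.

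The only delicate bookkeeping concerns the regime $j > k$, where $(k-j)a$ is negative and either $G + (k-j)a$ or $m - ja$ itself may fail to be non-negative. This is absorbed cleanly by the dichotomy above: whenever the argument of $\operatorname{d}$ goes below $0$, we have $i_0 = 0$ and the trivial case applies (using the convention $\mathtt{g}(a,b;-1) = -\infty$ already adopted in Lemma~\ref{lemma: d(g_s-jc) = i iff g_i-1 < g_s-jc <= g_i}). So there is no real obstacle; the argument is a one-line application of Lemma~\ref{lemma: d(g_s-jc) = i iff g_i-1 < g_s-jc <= g_i}, and the hypothesis $m > \mathtt{g}(a,b;s) + ka$ is precisely the amount of slack needed to push the strict inequality from $G + (k-j)a$ over to $m - ja$ uniformly in $j$.
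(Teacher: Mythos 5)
Your argument is correct. Note that the paper itself gives no proof of this lemma---it is quoted from \cite[Lemma 7]{Subwattanachai_g_three}---so there is no internal proof to compare against; your derivation from Lemma \ref{lemma: d(g_s-jc) = i iff g_i-1 < g_s-jc <= g_i} is a legitimate self-contained route and almost certainly the intended one. The one genuinely delicate point is handled properly: you cannot argue directly that any integer with $i_0$ representations exceeds $\mathtt{g}(a,b;i_0-1)$ (that is false in general, e.g.\ $\operatorname{d}(6;2,3)=2$ while $\mathtt{g}(2,3;1)=7$), but the number $\mathtt{g}(a,b;s)+(k-j)a$ has exactly the special form $\mathtt{g}(a,b;s)+jc$ with $c=a$ required by Lemma \ref{lemma: d(g_s-jc) = i iff g_i-1 < g_s-jc <= g_i}, so its forward direction yields $\mathtt{g}(a,b;i_0-1)<\mathtt{g}(a,b;s)+(k-j)a$, and adding $r=m-\mathtt{g}(a,b;s)-ka\geq 1$ gives $m-ja>\mathtt{g}(a,b;i_0-1)$, after which maximality of $\mathtt{g}(a,b;i_0-1)$ is used only in the valid direction to force $\operatorname{d}(m-ja;a,b)\geq i_0$. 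Your treatment of the regime $j>k$ is also sound: if $i_0\geq 1$ then $\mathtt{g}(a,b;s)+(k-j)a$ is representable, hence non-negative, so $m-ja$ is positive as well, and if the argument is negative the inequality is trivial under the usual convention $\operatorname{d}(n;a,b)=0$ for $n<0$.
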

	The next lemma show that, for $s \geq 0$, the number of representations for $\mathtt{g}\Big(\frac{t_{n+1}}{d_1}, \frac{t_{n+2}}{d_1}; x_s\Big) + y_s t_n$ in terms of $t_{n}, \frac{t_{n+1}}{d_{1}},$ and $\frac{t_{n+2}}{d_{1}}$ is equal to $s$.
	
	\begin{lemma}\label{lem-main leamma for thm 1}
		Let $k$ be a non-negative integer and $i \in \{0,1, \ldots, 2k+1\}$. We let $s := s_{k,i} = k(k+1) + i$. Then, for each even $n > N_s^{even}$ and odd $n > N_s^{odd}$,
		\begin{equation}\label{eq: main leamma for thm 1}
			\operatorname{d}\! \Bigg( \mathtt{g}\Big(\frac{t_{n+1}}{d_1}, \frac{t_{n+2}}{d_1}; x_s\Big) + y_s t_n ; t_n, \frac{t_{n+1}}{d_1}, \frac{t_{n+2}}{d_1} \Bigg)  = k(k+1) + i = s,
		\end{equation}
		where $(x_{s},y_{s}) = (x_{s}^{even},y_{s}^{even})$ if $n$ is even and $(x_{s},y_{s}) = (x_{s}^{odd},y_{s}^{odd})$ if $n$ is odd.
	\end{lemma}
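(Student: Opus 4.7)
The plan is to apply Lemma \ref{lemma: formula d to sum of d_tilda} to expand the three-variable count into
\[
\operatorname{d}\!\bigl(F;\, t_n, a, b\bigr) \;=\; \sum_{j=0}^{\lfloor F/t_n\rfloor} \operatorname{d}\!\bigl(F - j t_n;\, a, b\bigr),
\]
where, for brevity, $a = t_{n+1}/d_1$, $b = t_{n+2}/d_1$, and $F = \mathtt{g}(a,b;x_s) + y_s t_n$. Note that $t_n$ is an integer multiple of $a$ in both parities (namely $t_n = (n/2)a$ for even $n$ and $t_n = n a$ for odd $n$), so the hypothesis of Lemma \ref{lemma: d(g_s-jc) = i iff g_i-1 < g_s-jc <= g_i} is satisfied with $c = t_n$.

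Next, I would use Lemma \ref{lemma: d(g_s-jc) = i iff g_i-1 < g_s-jc <= g_i} together with the closed formula $\mathtt{g}(a,b;i) = (i+1)ab - a - b$ to identify each summand: the term $\operatorname{d}\!(F - j t_n;\, a, b)$ equals the unique non-negative integer $i_j$ determined by locating $(x_s+1)ab - a - b + (y_s - j)t_n$ within the slot $\bigl(\mathtt{g}(a,b;i_j - 1),\, \mathtt{g}(a,b;i_j)\bigr]$. Since consecutive slots have length $ab$ and $t_n$ is a fixed fraction of $ab$, the sequence $(i_j)_j$ is essentially monotone, decreasing by a controlled amount each time $j$ is incremented. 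The value $\lfloor F/t_n\rfloor$ will bound how far the decrement continues before all remaining terms vanish.

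I then split into the four cases dictated by Definition \ref{definition x,y}: even $n$ with $0 \le i \le k$, even $n$ with $k+1 \le i \le 2k+1$, and the analogous pair for odd $n$. In each case, substituting the explicit formulas for $(x_s, y_s)$ simplifies $F$ and $\lfloor F/t_n\rfloor$, after which the sequence of slot indices $i_j$ can be written down in closed form. Summing the counts $i_j$ over all $j$ then telescopes to $k(k+1) + i$, which is the claimed value $s$. The inequalities $n > N_s^{even}$ and $n > N_s^{odd}$ enter here to ensure that the thresholds $\mathtt{g}(a,b;i_j)$ are crossed cleanly and in the predicted order, i.e., that no two transitions are squeezed into the same value of $j$ and that $\lfloor F/t_n\rfloor$ takes its expected value.

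The main obstacle will be the bookkeeping in the case analysis: verifying in each of the four regimes that the computed sequence of slot indices $i_j$ is correct, and that the arithmetic sum of these indices is exactly $k(k+1) + i$. A secondary difficulty is checking that the tabulated bounds $N_s^{even}$ and $N_s^{odd}$ given in Table \ref{tabel N} are precisely what is needed to guarantee the slot-separation inequalities required by Lemma \ref{lemma: d(g_s-jc) = i iff g_i-1 < g_s-jc <= g_i}; Lemma \ref{lemma for optimal m} may be invoked to handle boundary $j$ values where the naive estimates fail.
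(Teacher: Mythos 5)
Your plan is sound and uses the same core ingredients as the paper: expand via Lemma \ref{lemma: formula d to sum of d_tilda}, note that $t_{n+1}/d_1$ divides $t_n$ so that Lemma \ref{lemma: d(g_s-jc) = i iff g_i-1 < g_s-jc <= g_i} applies with $c=t_n$, use the two-variable formula $\mathtt{g}(a,b;i)=(i+1)ab-a-b$ to locate each shifted value in its slot, and split according to parity of $n$ and the position of $i$ relative to $k$. The one organizational difference is that you propose to write down the entire slot-index sequence $(i_j)$ in closed form and sum it directly, whereas the paper runs an induction on $k$: at each step it only has to slot-locate the one or two newly added top terms (e.g.\ showing $\mathtt{g}(k-1)<\mathtt{g}(i)+(2k-2i-1)t_n<\mathtt{g}(i)+(2k-2i)t_n\le\mathtt{g}(k)$ for even $n$) and absorbs everything below via the hypothesis at level $k-1$, with the bound $N_s$ at level $k$ dominating the bounds needed at lower levels. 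Your direct route is feasible --- indeed the paper itself does exactly this kind of full sweep in the cases $y_s=0$ (its Cases E.2, O.2, O.4, where the inequalities are verified for all $\ell$ simultaneously) --- but it costs you a slot-separation inequality for every $j$, with the drift of $t_n$ against $ab$ (just under $\tfrac12 ab$ for even $n$, about $2ab$ for odd $n$) controlled uniformly over the whole range by $n>N_s^{even}$ or $n>N_s^{odd}$; the induction trades that global bookkeeping for a local check. Note also that Lemma \ref{lemma for optimal m} is not needed for this lemma (the paper uses it only later, in Theorem \ref{thm: g(t1,t2d1,t3d1) = g(xs)+ystn}, to pass from the representation count to the maximality statement), so you should not expect it to rescue any boundary cases here --- those must come out of the explicit inequalities themselves.
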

	\begin{proof}
		Throughout the proof, fo we denote 
		\begin{equation*}
			\mathtt{g}(m) := \mathtt{g}\Big(\frac{t_{n+1}}{d_1}, \frac{t_{n+2}}{d_1}; m\Big)
			\quad
			\text{ and } \quad
			\tilde{\operatorname{d}}(m) := \operatorname{d}\!\Big(m ; \frac{t_{n+1}}{d_1}, \frac{t_{n+2}}{d_1}\Big).
		\end{equation*}
		Then the left-handed side of \eqref{eq: main leamma for thm 1}, followed by Lemma \ref{lemma: formula d to sum of d_tilda}, can be written as
		\begin{equation}\label{eq in main Lemma}
			\operatorname{d}\! \Bigg( \mathtt{g}(x_{s}) + y_s t_n ; t_n, \frac{t_{n+1}}{d_1}, \frac{t_{n+2}}{d_1} \Bigg)
			=
			\sum_{j=0}^{\lfloor (\mathtt{g}(x_{s})) + y_s t_n )/t_{n} \rfloor} \tilde{\operatorname{d}}\big( \mathtt{g}(x_{s}) + (y_{s}-j)t_{n} \big).
		\end{equation}
		Recall again that, for the case 2 variables, $\tilde{\operatorname{d}}(\mathtt{g}(m)) = m$ for all $m \geq 0$.
		We will prove this lemma by induction on non-negative integers $k$.
		\\
		\textbf{Base step:} Let $k =0$. So $i \in\{0,1\}$. 
        
        Suppose that $i=0$, then $s = 0$. So $N_0^{even} = 0 $, $ N_0^{odd} = -3$ and $x_0^{even} = y_0^{even} = x_{0}^{odd} = y_{0}^{odd} = 0$. Since $n$ is a positive integer, it follows, by Lemma \ref{lemma: formula d to sum of d_tilda} and Lemma \ref{lemma: d(g_s-jc) = i iff g_i-1 < g_s-jc <= g_i}, that $\operatorname{d}\!\big(\mathtt{g}(0); t_{n}, \frac{t_{n+1}}{d_{1}}, \frac{t_{n+2}}{d_{1}}\big) = 0$.
		
		If $i =1$, then $s =1$, $N_{1}^{even} = 0$, $N_{1}^{odd} = 3$. For each even integer $n > 0$, we have
		\begin{center}
			$x_{1} = x_{1}^{even} = 0\quad$ and $\quad y_{1} = y_{1}^{even} = 1$.
		\end{center}
		Observe that, for all even $n > 0 = N_1^{even}$,
		\begin{equation*}
			\mathtt{g}(0) < \mathtt{g}(0)+ t_{n} \leq \mathtt{g}(1),
		\end{equation*}
		thus, by Lemma \ref{lemma: d(g_s-jc) = i iff g_i-1 < g_s-jc <= g_i},  $\tilde{\operatorname{d}}\big(\mathtt{g}(0)+ t_{n}\big) = 1$. So, for all even numbers $n > 0$,
		\begin{equation*}
			\operatorname{d}\! \Big(\mathtt{g}(0) + t_{n}; t_{n}, \frac{t_{n+1}}{d_{1}}, \frac{t_{n+2}}{d_{1}}\Big) = \tilde{\operatorname{d}}\big(\mathtt{g}(0)+t_{n}\big) + \sum_{j=0}^{\lfloor (\mathtt{g}(0)+t_{n})/t_{n}\rfloor} \tilde{\operatorname{d}}\big(\mathtt{g}(0)-jt_{n}\big) = 1.
		\end{equation*}
		For each odd integer $n > 3 = N_{1}^{odd},$ we have
		\begin{center}
			$x_{1} = x_{1}^{odd} = 1\quad$ and $\quad y_{1} = y_{1}^{odd} = 0$.
		\end{center}
		One can show that, for odd $n \geq 3$, $ \mathtt{g}(1)-t_{n} \leq \mathtt{g}(0).$
		Therefore, by \eqref{eq in main Lemma} and Lemma \ref{lemma: d(g_s-jc) = i iff g_i-1 < g_s-jc <= g_i}, we obtain that, for odd $n > 3$,
		\begin{equation*}
			\operatorname{d}\! \Big(\mathtt{g}(1); t_{n}, \frac{t_{n+1}}{d_{1}}, \frac{t_{n+2}}{d_{1}}\Big) = \tilde{\operatorname{d}}\big(\mathtt{g}(1)\big) + \sum_{j=1}^{\lfloor \mathtt{g}(1)/t_{n}\rfloor} \tilde{\operatorname{d}}\big(\mathtt{g}(1)-jt_{n}\big) = 1.
		\end{equation*}
		
		\textbf{Induction step:} Assume $k$ is a positive integer and we suppose that $P(k-1)$ holds. In other words, for $r \in \{0,1,\ldots, 2k-1\}$ with $v \coloneqq v_{r} = (k-1)k + r$, the following condition is satisfied:
		\begin{equation*}
			\operatorname{d}\!\Bigg(\mathtt{g}(x_{v}) + y_{v}t_{n}; t_{n}, \frac{t_{n+1}}{d_{1}}, \frac{t_{n+2}}{d_{1}}\Bigg)
			 = (k-1)k + r = v,
		\end{equation*}
		for even $n > N_{v}^{even}$ and odd $n > N_{v}^{odd}$. $N_{v}^{even}$ and $N_{v}^{odd}$ are shown in Table \ref{tabel N_r}. 
        
        The goal is to show that $P(k)$ holds. To prove this, we divide the argument into eight cases, as outlined in Table \ref{tabel N}. We begin with  the scenario when $n$ is even.
        \begin{table}[h]
			\renewcommand{\arraystretch}{1.3}
			\centering
			\begin{tabular}{|c|c|c|c|c|}
				\hline
				$r$ & $ 0,1,\ldots,k-2 $ & $k-1$ & $k, \ldots, 2k-2$ & $2k-1$ \\
				\hline
				$N_{v}^{even}$ & $6k-12$ & $6k-6$ & $6k-6$ & $6k-6$ \\
				\hline
				$N_{v}^{odd}$ & $6k-9$ & $6k-9$ & $6k-9$ & $6k-3$ \\
				\hline
			\end{tabular}
                \vspace{0.25em}
			\caption{$N_{v}^{even}$ and $N_{v}^{odd}$ for $k\geq 1$ when $v = (k-1)k + r$ for $0 \leq r \leq 2k-1$}
            \label{tabel N_r}
		\end{table}
		
		\textbf{Case E.1.} Let $i \in \{0,1,\ldots, k-1\}$ and $s = k(k+1) + i$. So $N_{s}^{even} = 6k-6$,
		\begin{center}
			$x_{s}^{even} = i \quad$ and $\quad y_{s}^{even} = 2(k-i)$.
		\end{center}
		Observe that, by induction hypothesis, we have, for each $r = 0,1,\ldots, k-1$ and $n > 6k-6$, $x_{v}^{even} = r$, and $y_{v}^{even} = 2((k-1)-r)$. So
		\begin{equation*}
			\sum_{j=0}^{\left\lfloor\frac{\mathtt{g}(r)}{t_{n}}\right\rfloor+2(k-r-1)} \tilde{\operatorname{d}}\Big(\mathtt{g}(r) + (2k-2r-2-j)t_{n}\Big) 
			= 
			\operatorname{d}\!\Bigg(\mathtt{g}(r)+ 2(k-1-r)t_{n}; t_{n}, \frac{t_{n+1}}{d_{1}}, \frac{t_{n+2}}{d_{1}} \Bigg)
			= (k-1)k + r.
		\end{equation*}
		So, it follows that
		\begin{align} \label{eq case E.1}
			&\operatorname{d}\!\Bigg(\mathtt{g}(i)+ 2(k-i)t_{n}; t_{n}, \frac{t_{n+1}}{d_{1}}, \frac{t_{n+2}}{d_{1}} \Bigg)
			\nonumber
			\\
			&=
			\tilde{\operatorname{d}}\Big(\mathtt{g}(i)+(2k-2i)t_{n}\Big) +
			\tilde{\operatorname{d}}\Big(\mathtt{g}(i)+(2k-2i-1)t_{n}\Big)
			+
			\sum_{j=0}^{\left\lfloor\frac{\mathtt{g}(i)}{t_{n}}\right\rfloor+2k-2i-2} \tilde{\operatorname{d}}\Big(\mathtt{g}(i) + (2k-2i-2-j)t_{n}\Big)
			\nonumber
			\\
			&=
			\tilde{\operatorname{d}}\Big(\mathtt{g}(i)+(2k-2i)t_{n}\Big) +
			\tilde{\operatorname{d}}\Big(\mathtt{g}(i)+(2k-2i-1)t_{n}\Big)
			+ (k-1)k + i.
		\end{align}
		We will claim that for even $n > 6k-6$,
		\begin{equation}\label{relation case E.1}
			\mathtt{g}(k-1) < \mathtt{g}(i) + (2k-2i-1)t_{n} < \mathtt{g}(i)+(2k-2i)t_{n} \leq \mathtt{g}(k).
		\end{equation}
		To determine the condition for an even integer $n$ such that $\mathtt{g}(k-1) < \mathtt{g}(i) + (2k-2i-1)t_{n}$, we can equivalently examine the inequality:
		\begin{equation*}
			\mathtt{g}(i)-\mathtt{g}(k-1) + (2k-2i-1)t_{n} > 0.
		\end{equation*}
		Since $n$ is even, $\mathtt{g}(m) = \mathtt{g}\big(\frac{t_{n+1}}{d_{1}}, \frac{t_{n+2}}{d_{1}}; m \big) = (m+1)(n+1)(n+3) - (n+1) - (n+3)$. Consequently, the above inequality becomes
		\begin{align*}
			&\hspace{1cm}(i-k+1)(n+1)(n+3) + (2k-2i-1)\frac{n(n+1)}{2} > 0
			\\
			&\Longleftrightarrow(2i-2k+2)(n+3) + (2k-2i-1)n > 0
			\\
			&\Longleftrightarrow n+ 6(i-k+1) > 0
			\quad\Longleftrightarrow \quad n > 6(k-i-1),
		\end{align*}
		Therefore, the inequality holds for even $n \geq 6k-6$ $(i = 1,\ldots,k-1)$ and holds for even $n >6k-6$ $(i=0)$.
		
        On the other hand, to show that $\mathtt{g}(i)+(2k-2i)t_{n} \leq \mathtt{g}(k)$ for even $n>6k-6$, it is equivalent to consider the inequality
		\begin{align*}
			&\hspace{1cm}(k-i)(n+1)(n+3) - (2k-2i)\frac{n(n+1)}{2} \geq 0
			\\
			&\Longleftrightarrow(2k-2i)(n+3) -(2k-2i)n \geq 0
			\quad\Longleftrightarrow\quad 6(k-i) \geq 0,
		\end{align*}
		which is always true since $i=0,1,\ldots,k-1$. By \eqref{relation case E.1} and Lemma \ref{lemma: d(g_s-jc) = i iff g_i-1 < g_s-jc <= g_i}, we get
		\begin{equation*}
			\tilde{\operatorname{d}}\Big(\mathtt{g}(i)+(2k-2i)t_{n}\Big) =
			\tilde{\operatorname{d}}\Big(\mathtt{g}(i)+(2k-2i-1)t_{n}\Big)
			= k.
		\end{equation*} 
		Hence, \eqref{eq case E.1} becomes
		\begin{equation*}
			\operatorname{d}\!\Bigg(\mathtt{g}(i)+ 2(k-i)t_{n}; t_{n}, \frac{t_{n+1}}{d_{1}}, \frac{t_{n+2}}{d_{1}} \Bigg)
			=
			2k + (k-1)k + i
			=
			k(k+1) + i = s.
		\end{equation*}
		
		\textbf{Case E.2.} Let $i = k$ and $s = k(k+1)+k$. In this case, $N_{s}^{even} = 6k$, Then $x_{s}^{even} = k$ and $y_{s}^{even} = 0$. So
		\begin{equation} \label{eq case E.2}
			\operatorname{d}\!\Bigg(\mathtt{g}(k); t_{n}, \frac{t_{n+1}}{d_{1}}, \frac{t_{n+2}}{d_{1}} \Bigg)
			=
			\tilde{\operatorname{d}}\Big(\mathtt{g}(k)\Big) +
			\sum_{j=1}^{\left\lfloor\frac{\mathtt{g}(k)}{t_{n}}\right\rfloor} \tilde{\operatorname{d}}\Big(\mathtt{g}(k) - jt_{n}\Big)
			=
			k + 	\sum_{j=1}^{\left\lfloor\frac{\mathtt{g}(k)}{t_{n}}\right\rfloor} \tilde{\operatorname{d}}\Big(\mathtt{g}(k) - jt_{n}\Big).
		\end{equation}
		We will claim that, for all $n > 6k$ and $\ell = 0,1,\ldots, k$,
		\begin{equation}\label{relation in E.2}
			\mathtt{g}\big(k-\ell-1\big) < \mathtt{g}(k) - (2\ell+2)t_{n} < \mathtt{g}(k) - (2\ell+1)t_{n} \leq \mathtt{g}\big(k-\ell\big).
		\end{equation}
		The second inequality is obvious. To establish the first condition, we examine
		\begin{align*}
			\mathtt{g}\big(k-\ell-1\big) < \mathtt{g}(k) - (2\ell+2)t_{n}
			&\Longleftrightarrow (\ell+1)(n+1)(n+3) - (2\ell+2)t_{n} >0
			\\
			&\Longleftrightarrow (2\ell+2)(n+3) - (2\ell+2)n >0
			\\
			&\Longleftrightarrow 6(\ell+1) >0,
		\end{align*}
		which holds for $\ell \geq 0$. In the same way, we have that
		\begin{align*}
			\mathtt{g}(k) - (2\ell+1)t_{n} \leq \mathtt{g}\big(k-\ell\big)
			&\Longleftrightarrow n \geq 6\ell,
		\end{align*}
		which is true since $n > 6k \geq 6\ell$. Thus $\eqref{relation in E.2}$ holds. Additionally,  it is not challenging to demonstrate that $ \mathtt{g}(k) \geq 2kt_{n}$ for all $n \geq 1$. By \eqref{relation in E.2} and Lemma \ref{lemma: d(g_s-jc) = i iff g_i-1 < g_s-jc <= g_i}, \eqref{eq case E.2} becomes
		\begin{equation*}
			\operatorname{d}\!\Bigg(\mathtt{g}(k); t_{n}, \frac{t_{n+1}}{d_{1}}, \frac{t_{n+2}}{d_{1}} \Bigg)
			=
			k + 2\big(k + (k-1) + \cdots + 1\big)
			=
			k + k(k+1) = s.
		\end{equation*}
		
		\textbf{Case E.3.} Let $i \in\{k+1, k+2, \ldots, 2k\}$ and let $s = k(k+1)+i$. In this case we have $N_{s}^{even} = 6k$,
		\begin{center}
			$x_{s}^{even} = i-(k+1)\quad$ and $\quad y_{s}^{even} = 4k-2i+3$.
		\end{center}
		By the induction hypothesis, we have, for each $r \in \{k, k+1, \ldots, 2k-1\}$, $v \coloneqq (k-1)k+r$, $x_{v}^{even} = r-(k-1+1) = r-k$ and $y_{v}^{even} = 4(k-1)-2r+3$. Then, for even $n > 6k-6$,
		\begin{align*}
			(k-1)k + r 
			&=
			\operatorname{d}\!\Bigg(\mathtt{g}(r-k)+ \big(4(k-1)-2r+3\big)t_{n}; t_{n}, \frac{t_{n+1}}{d_{1}}, \frac{t_{n+2}}{d_{1}} \Bigg)
			\\
			&=
			\sum_{j=0}^{\left\lfloor\frac{\mathtt{g}(r-k)}{t_{n}}\right\rfloor+(4k-2r-1)} \tilde{\operatorname{d}}\Big(\mathtt{g}(r-k) + (4k-2r-1-j)t_{n}\Big).
		\end{align*}
		Observe that
		\begin{align} \label{eq case E.3}
			&\operatorname{d}\!\Bigg(\mathtt{g}(i-(k+1))+ (4k-2i+3)t_{n}; t_{n}, \frac{t_{n+1}}{d_{1}}, \frac{t_{n+2}}{d_{1}} \Bigg)
			\nonumber
			\\
			&=
			\tilde{\operatorname{d}}\Big(\mathtt{g}(i-k-1)+(4k-2i+3)t_{n}\Big) +
			\tilde{\operatorname{d}}\Big(\mathtt{g}(i-k-1)+(4k-2i+2)t_{n}\Big)
			\nonumber
			\\
			&\hspace{2cm}+
			\sum_{j=0}^{\left\lfloor\frac{\mathtt{g}(i-k-1)}{t_{n}}\right\rfloor+4k-2i+1} \tilde{\operatorname{d}}\Big(\mathtt{g}(i-k-1) + (4k-2i+1-j)t_{n}\Big).
		\end{align}
	One can see that, in the last summation, because $i-1 \in \{k, k+1, \ldots, 2k-1\}$, then by the induction hypothesis we obtain
		\begin{equation*}
			\sum_{j=0}^{\left\lfloor\frac{\mathtt{g}(i-k-1)}{t_{n}}\right\rfloor+4k-2i+1} \tilde{\operatorname{d}}\Big(\mathtt{g}(i-k-1) + (4k-2i+1-j)t_{n}\Big)
            = (k-1)k + i -1.
		\end{equation*}
		Therefore, \eqref{eq case E.3} becomes
		\begin{align*}
			\operatorname{d}\!\Bigg(&\mathtt{g}(i-(k+1))+(4k-2i+3)t_{n}; t_{n}, \frac{t_{n+1}}{d_{1}}, \frac{t_{n+2}}{d_{1}} \Bigg)
			\\
			&=
			\tilde{\operatorname{d}\!}\Big(\mathtt{g}(i-k-1)+(4k-2i+3)t_{n}\Big) +
			\tilde{\operatorname{d}\!}\Big(\mathtt{g}(i-k-1)+(4k-2i+2)t_{n}\Big)
			\\
			&\hspace{10pt}+
			(k-1)k+i-1.
		\end{align*}
		We claim that for even $n > 6k$
		\begin{equation}\label{relation in E.3.1}
			\mathtt{g}(k-1) < \mathtt{g}(i-k-1) + (4k-2i+2)t_{n}  \leq \mathtt{g}(k)	
		\end{equation} 
		and
		\begin{equation}\label{relation in E.3.2}
			\mathtt{g}(k) < \mathtt{g}(i-k-1) + (4k-2i+3)t_{n}  \leq \mathtt{g}(k+1).
		\end{equation}
		Since the inequality
		\begin{align*}
			\mathtt{g}(k-1) < \mathtt{g}(i-k-1) + (4k-2i+2)t_{n}
			&\Longleftrightarrow
			n > 3(2k-i).
		\end{align*}
		It holds for $i = k+1, \ldots, 2k$. In the same way, we have
		\begin{align*}
			\mathtt{g}(i-k-1) + (4k-2i+2)t_{n}  \leq \mathtt{g}(k)	
			&\Longleftrightarrow
			3(4k-2i+2) \geq 0,
		\end{align*}
		which is always true. So \eqref{relation in E.3.1} holds for $n > 6k$. To show \eqref{relation in E.3.2}, we examine the inequality
		\begin{align*}
			\mathtt{g}(k) < \mathtt{g}(i-k-1) + (4k-2i+3)t_{n}
			&\Longleftrightarrow
			n >  
            6(2k-i+1),
		\end{align*}
		which holds since $i \geq k+1$ and $n > 6k$. Notice that it still holds for even $n \geq 6k$ if $i = k+2,\ldots, 2k-1$ (excluding $i = k+1$). On the other hand,
		\begin{align*}
			\mathtt{g}(i-k-1) + (4k-2i+3)t_{n}  \leq \mathtt{g}(k+1)
			&\Longleftrightarrow
			n + 6(2k-i+2) > 0,
		\end{align*}
		which is always true since $i \leq 2k$. So \eqref{relation in E.3.2} holds. Therefore
		\begin{equation*}
			\operatorname{d}\!\Bigg(\mathtt{g}(i-(k+1))+(4k-2i+3)t_{n}; t_{n}, \frac{t_{n+1}}{d_{1}}, \frac{t_{n+2}}{d_{1}} \Bigg)
			=
			(k+1) + k + (k-1)k+i-1
			= k(k+1) + i = s.
		\end{equation*}
		
		\textbf{Case E.4.} Let $i = 2k+1$ and $s = k(k+1)+ (2k+1)$. Then $N_{s}^{even} = 6k$,
		\begin{center}
			$x_{s}^{even} = 2k+1-(k+1) = k \quad$ and $\quad y_{s}^{even} = 4k - 2(2k+1) +3 = 1$ 
		\end{center}
		Then, by \eqref{relation in E.2} in Case E.2, it follows that for all even $n > 6k$
		\begin{align}\label{eq in Case E.4}
			\operatorname{d}\!\Bigg(\mathtt{g}(k)+t_{n}; t_{n}, \frac{t_{n+1}}{d_{1}}, \frac{t_{n+2}}{d_{1}} \Bigg)
			&=
			\tilde{\operatorname{d}}\Big(\mathtt{g}(k)+t_{n}\Big) +
			\tilde{\operatorname{d}}\Big(\mathtt{g}(k)\Big) +
			\sum_{j=1}^{\left\lfloor\frac{\mathtt{g}(k)}{t_{n}}\right\rfloor} \tilde{\operatorname{d}}\Big(\mathtt{g}(k) - jt_{n}\Big)
			\nonumber
			\\
			&= \tilde{\operatorname{d}}\Big(\mathtt{g}(k)+t_{n}\Big) +
			k +
			k(k+1).
		\end{align}
		It remains to show that $ \tilde{\operatorname{d}}\Big(\mathtt{g}(k)+t_{n}\Big) = k+1$ which is equivalent to show that
		\begin{equation*}
			\mathtt{g}(k) < \mathtt{g}(k)+t_{n} \leq \mathtt{g}(k+1).
		\end{equation*}
		The first condition is obvious and we have
		\begin{align*}
			\mathtt{g}(k)+t_{n} \leq \mathtt{g}(k+1) 
                &\Longleftrightarrow
            n \geq -6.
		\end{align*}
		Therefore, for even $n > 6k$, \eqref{eq in Case E.4} becomes
		\begin{equation*}
			\operatorname{d}\!\Bigg(\mathtt{g}(k)+t_{n}; t_{n}, \frac{t_{n+1}}{d_{1}}, \frac{t_{n+2}}{d_{1}} \Bigg)
			= (k+1)+ k + k(k+1) = k(k+1) + (2k+1) = s.
		\end{equation*}
		Hence, we have completed the proof for the case when $n > N_{s}^{even}$ is even. Lastly, we will prove the case when $n > N_{s}^{odd}$ is odd. 
		
		\textbf{Case O.1.} Let $i \in \{0,1,\ldots, k-1\}$ and let $s = k(k+1)+i$. In this case, $N_{s}^{odd} = 6k-3$,
		\begin{center}
			$x_{s}^{odd} = 2i\quad$ and $\quad y_{s}^{odd} = k-i$.
		\end{center}
		By induction hypothesis, we have, for all $r \in\{0,1,\ldots, k-1\}$ with $v = (k-1)k +r$, $x_v^{odd} = 2r$, $y_{v}^{odd} = k-1-r$ so that, for odd $n > 6k-9$, 
		\begin{equation*}
			(k-1)k + r = 
			\operatorname{d}\!\Bigg(\mathtt{g}(2r)+ (k-1-r)t_{n}; t_{n}, \frac{t_{n+1}}{d_{1}}, \frac{t_{n+2}}{d_{1}} \Bigg)
			=
			\sum_{j=0}^{\left\lfloor\frac{\mathtt{g}(2r)}{t_{n}}\right\rfloor+(k-r-1)} \tilde{\operatorname{d}}\Big(\mathtt{g}(2r) + (k-r-1-j)t_{n}\Big).
		\end{equation*}
		Observe that
		\begin{align} \label{eq case O.1}
			&\operatorname{d}\!\Bigg(\mathtt{g}(2i)+ (k-i)t_{n}; t_{n}, \frac{t_{n+1}}{d_{1}}, \frac{t_{n+2}}{d_{1}} \Bigg)
			\nonumber
			\\
			&=
			\tilde{\operatorname{d}}\Big(\mathtt{g}(2i)+(k-i)t_{n}\Big) 
			+
			\sum_{j=0}^{\left\lfloor\frac{\mathtt{g}(2i)}{t_{n}}\right\rfloor+k-i-1} \tilde{\operatorname{d}}\Big(\mathtt{g}(2i) + (k-i-1-j)t_{n}\Big)
			\nonumber
			\\
			&=
			\tilde{\operatorname{d}}\Big(\mathtt{g}(2i)+(k-i)t_{n}\Big)
			+ (k-1)k + i.
		\end{align}
		We need to show that for odd $n > 6k-3$
		\begin{equation}\label{relation in O.1}
			\mathtt{g}(2k-1) < \mathtt{g}(2i) + (k-i)t_{n} \leq \mathtt{g}(2k).
		\end{equation}
		Since $n$ is odd, $\mathtt{g}(m) = \mathtt{g}\big(\frac{t_{n+1}}{d_{1}}, \frac{t_{n+2}}{d_{1}}; m\big) = (m+1)\frac{(n+1)(n+3)}{4} - \frac{(n+1)}{2} - \frac{(n+3)}{2}$. Thus
		\begin{align*}
			\mathtt{g}(2k-1) < \mathtt{g}(2i) + (k-i)t_{n} 
			&\Longleftrightarrow
			(2i-2k+1)\frac{(n+1)(n+3)}{4} + (k-i)t_{n} > 0
			\\
			&\Longleftrightarrow
			(2i-2k+1)(n+3) + (2k-2i)n > 0
			\\
			&\Longleftrightarrow
			n  > 6k-6i-3,
		\end{align*}
		which holds since $i \geq 0$ and $n>6k-3$. Notice that if $i = 1,\ldots, k-1$ (excluding $i = 0$), it also holds for all odd $n \geq 6k-3$. On the other hand, 
		\begin{align*}
			\mathtt{g}(2i) + (k-1)t_{n} \leq \mathtt{g}(2k) 
			&\Longleftrightarrow
			3(2k-2i) > 0,
		\end{align*}
		which is always true since $0 \leq i \leq k-1$. Therefore, by \eqref{relation in O.1}, \eqref{eq case O.1}, and Lemma \ref{lemma: d(g_s-jc) = i iff g_i-1 < g_s-jc <= g_i}, we obtain
		\begin{equation*}
			\operatorname{d}\!\Bigg(\mathtt{g}(2i)+ (k-i)t_{n}; t_{n}, \frac{t_{n+1}}{d_{1}}, \frac{t_{n+2}}{d_{1}} \Bigg)
			= 2k + (k-1)k +i = k(k+1)+i =s.
		\end{equation*}
		
		\textbf{Case O.2.} Let $i = k$ and $s = k(k+1)+k$. Then $N_{s}^{odd} = 6k-3$, $x_{s}^{odd} = 2k$ and $y_{s}^{odd} = 0$.
		We first claim that for all $\ell \in \{1,\ldots, k\}$ and for odd $n > 6k-3$
		\begin{equation}\label{relation in Case O.2}
			\mathtt{g}(2k-2\ell) < \mathtt{g}(2k)-\ell t_{n} \leq \mathtt{g}(2k-2\ell+1)
		\end{equation}
		One can show that
		\begin{align*}
			\mathtt{g}(2k-2\ell) < \mathtt{g}(2k)-\ell t_{n} 
			&\Longleftrightarrow 
			6\ell  > 0,
		\end{align*}
		which is true for all $\ell \geq 1$. On the other hand, we obtain that
		\begin{align*}
			\mathtt{g}(2k)- \ell t_{n} \leq \mathtt{g}(2k-2\ell+1)
			&\Longleftrightarrow 
			n    \geq 6\ell-3,
		\end{align*}
		which holds since $n > 6k-3$. Observe that $\mathtt{g}(2k) \geq kt_{n}$ for $n \geq 1$. Therefore, by applying Lemma \ref{lemma: d(g_s-jc) = i iff g_i-1 < g_s-jc <= g_i} to \eqref{relation in Case O.2}, we obtain
		\begin{align*}
			\operatorname{d}\!\Bigg(\mathtt{g}(2k); t_{n}, \frac{t_{n+1}}{d_{1}}, \frac{t_{n+2}}{d_{1}} \Bigg)
			&=
			\tilde{\operatorname{d}}\Big(\mathtt{g}(2k)\Big)  
			+
			\sum_{j=1}^{\left\lfloor\frac{\mathtt{g}(2k)}{t_{n}}\right\rfloor} \tilde{\operatorname{d}}\Big(\mathtt{g}(2k) - jt_{n}\Big)
			\\
			&= 2k + \Big((2k-1) + (2k-3) + \cdots + 3 + 1\Big) = 2k + k^{2} = s.
		\end{align*}
		
		\textbf{Case O.3.} Let $i \in \{k+1, \ldots, 2k\}$ and let $s = k(k+1) + i$. In this case we have $N_{s}^{odd} = 6k-3$,
		\begin{center}
			$x_{s}^{odd} = 2(i-(k+1))+1 = 2i-2k-1 \quad$ and $\quad y_{s}^{odd} = 2k-i+1$.
		\end{center}
		By the induction hypothesis, if $r \in \{k, k+1, \ldots, 2k-1\}$ and $v \coloneqq (k-1)k + r$, then $x_{v}^{odd} = 2(r-k)+1 = 2r-2k+1$ and $y_{v}^{odd} = 2(k-1)-r+1 = 2k-r-1$. Then, for odd $n > 6k-3 = N_{v}^{odd}$, we have
		\begin{align*}
			(k-1)k + r 
			&=
			\operatorname{d}\!\Bigg(\mathtt{g}(2r-2k+1)+ (2k-r-1)t_{n}; t_{n}, \frac{t_{n+1}}{d_{1}}, \frac{t_{n+2}}{d_{1}} \Bigg)
			\\
			&=
			\sum_{j=0}^{\left\lfloor\frac{\mathtt{g}(2r-2k+1)}{t_{n}}\right\rfloor + 2k-r-1} \tilde{\operatorname{d}}\Big(\mathtt{g}(2r-2k+1)+ (2k-r-1-j)t_{n} \Big).
		\end{align*}
		We consider
		\begin{align*}
			&\operatorname{d}\!\Bigg(\mathtt{g}(2i-2k-1)+ (2k-i+1)t_{n}; t_{n}, \frac{t_{n+1}}{d_{1}}, \frac{t_{n+2}}{d_{1}} \Bigg)
			\\
			&=
			\tilde{\operatorname{d}}\Big(\mathtt{g}(2i-2k-1)+ (2k-i+1)t_{n} \Big)
			+
			\sum_{j=0}^{\left\lfloor\frac{\mathtt{g}(2i-2k-1)}{t_{n}}\right\rfloor + 2k-i} \tilde{\operatorname{d}}\Big(\mathtt{g}(2i-2k-1)+ (2k-i-j)t_{n} \Big).
		\end{align*}
		Notice that $i-1 \in \{k, k+1,\ldots, 2k-1\}$. So, in the last summation, we obtain by the induction hypothesis that the right-hand side of the above equation is equal to
		\begin{equation*}
			\tilde{\operatorname{d}}\Big(\mathtt{g}(2i-2k-1)+ (2k-i+1)t_{n} \Big)
			+
			(k-1)k + i - 1.
		\end{equation*}
	So it remains to show that for odd $n > 6k-3$
		\begin{equation}\label{relation in O.3}
			\mathtt{g}(2k) < \mathtt{g}(2i-2k-1) + (2k-i+1)t_{n} \leq \mathtt{g}(2k+1).
		\end{equation}
		Again, we have
		\begin{align*}
			\mathtt{g}(2k) < \mathtt{g}(2i-2k-1) + (2k-i+1)t_{n} 
			&\Longleftrightarrow 
			n  > 3(4k-2i+1),
		\end{align*}
		which is true since $i \geq k+1$ and $n > 6k-3$. Notice that it also holds for $n \geq 6k-3$ if $i \in \{k+2, k+3,\ldots, 2k\}$. In the same way,
		\begin{align*}
			\mathtt{g}(2i-2k-1) + (2k-i+1)t_{n} \leq \mathtt{g}(2k+1)
			&\Longleftrightarrow 
			6(2k-i+1)  \geq 0,
		\end{align*}
		which holds for all $i = k+1, k+2, \ldots, 2k$ and all $n$. This implies \eqref{relation in O.3} holds, so we are done in this case. 
		
		\textbf{Case O.4.} On the last case, we let $i = 2k+1$ and let $s = k(k+1) + 2k+1 = (k+1)(k+2) - 1$. We have $N_{s}^{odd} = 6k+3$, $x_{s}^{odd} = 2k+1$ and $ y_{s}^{odd} = 0$.
		
		We first show that, for odd $n > 6k+3$, if $\ell \in \{1,2,\ldots, k+1\}$,
		\begin{equation}\label{relation in O.4}
			\mathtt{g}\big(2k-2\ell+1\big) < \mathtt{g}(2k+1) - \ell t_{n} \leq \mathtt{g}\big(2k-2\ell+2\big)
		\end{equation}
		So, we consider
		\begin{align*}
			\mathtt{g}(2k-2\ell+1) < \mathtt{g}(2k+1) - \ell t_{n}
			&\Longleftrightarrow 
			6\ell  > 0,
		\end{align*}
		which is always true for $\ell \geq 1$. On the other hand, we have
		\begin{align*}
			\mathtt{g}(2k+1) - \ell t_{n} \leq \mathtt{g}\big(2k-2\ell+2 \big)
			&\Longleftrightarrow 
			n \geq 6\ell-3,
		\end{align*}
		which holds for $\ell = 1,2,\ldots, k+1$ since $n > 6k+3 = 6(k+1)-3$.
		
		One can also shows that $\mathtt{g}(2k+1) \geq kt_{n}$ for all $n \geq 1$. Using \eqref{relation in O.4} and Lemma \ref{lemma: d(g_s-jc) = i iff g_i-1 < g_s-jc <= g_i}, we obtain
		\begin{align*}
			\operatorname{d}\!\Bigg(\mathtt{g}(2k+1); t_{n}, \frac{t_{n+1}}{d_{1}}, \frac{t_{n+2}}{d_{1}} \Bigg)
			&=
			\tilde{\operatorname{d}}\Big(\mathtt{g}(2k+1)\Big)
			+
			\sum_{j=1}^{\left\lfloor\frac{\mathtt{g}(2k+1)}{t_{n}}\right\rfloor} \tilde{\operatorname{d}}\Big(\mathtt{g}(2k+1) -jt_{n} \Big)
			\\
			&=
			(2k+1) + \big(2k + 2(k-1) + \cdots + 2 \big)
			\\
			&= (2k+1) + k(k+1) = s,
		\end{align*}
		for all odd integers $n > 6k+3 = N_{s}^{odd}$. Hence we complete to prove the induction step.
	\end{proof}
	
		Note that, regarding the proof of Lemma \ref{lem-main leamma for thm 1}, if $s \notin \mathbb{B}$ (i.e. $i \neq 0, k+1$), the formula \eqref{eq: main leamma for thm 1} holds for even $n \geq N_s^{even}$ and odd $n \geq N_s^{odd}$.
	Next, we present Theorem \ref{thm: g(t1,t2d1,t3d1) = g(xs)+ystn}, which is a consequence of Lemma \ref{lem-main leamma for thm 1} and Lemma \ref{lemma for optimal m}. Following this, we show the proof for Theorem \ref{thm-g(tn,tn+1,tn+2;s)}.
	
	\begin{theorem}\label{thm: g(t1,t2d1,t3d1) = g(xs)+ystn}
		Let $s$ be a non-negative integer with $s= k(k+1)+i$ for some integers $k \geq 0$ and $0 \leq i \leq 2k+1$. Then, for  even $n > N_{s}^{even}$ and odd $n > N_{s}^{odd}$, 
		\begin{align*}
			\mathtt{g}\Big(t_{n}, \frac{t_{n+1}}{d_{1}}, \frac{t_{n+2}}{d_{1}}; s\Big)
			&
			= \mathtt{g}\Big(\frac{t_{n+1}}{d_{1}}, \frac{t_{n+2}}{d_{1}}; x_{s}\Big) + y_{s}t_{n}
			\nonumber \\
			&
			=
			(x_{s} +1)\frac{t_{n+2}t_{n+1}}{d_{1}^{2}} - \frac{t_{n+2}}{d_{1}} - \frac{t_{n+1}}{d_{1}} + y_{s}t_{n},
		\end{align*}
		where $(x_{s},y_{s}) = (x_{s}^{even}, y_{s}^{even})$ if $n$ is even and $(x_{s},y_{s}) = (x_{s}^{odd}, y_{s}^{odd})$ if $n$ is odd.
	\end{theorem}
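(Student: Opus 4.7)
My plan is to establish the theorem by showing that $M := \mathtt{g}\bigl(\frac{t_{n+1}}{d_1}, \frac{t_{n+2}}{d_1}; x_s\bigr) + y_s t_n$ has exactly $s$ three-variable representations in $t_n, \frac{t_{n+1}}{d_1}, \frac{t_{n+2}}{d_1}$, while every integer $m > M$ has strictly more, so that $M = \mathtt{g}(t_n, \frac{t_{n+1}}{d_1}, \frac{t_{n+2}}{d_1}; s)$. The closed-form right-hand side displayed in the theorem then follows by substituting the classical two-variable formula $\mathtt{g}(a,b;x_s) = (x_s+1)ab - a - b$ with $a = t_{n+1}/d_1$ and $b = t_{n+2}/d_1$.

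The bound $\operatorname{d}(M;\ldots) = s$ is precisely the content of Lemma \ref{lem-main leamma for thm 1}, which hands us the "$\leq s$" half for free. The main work is showing $\operatorname{d}(m; t_n, \frac{t_{n+1}}{d_1}, \frac{t_{n+2}}{d_1}) \geq s+1$ for every $m > M$. The key preparatory observation is that $a := t_{n+1}/d_1$ divides $t_n$: for even $n$, $a = n+1$ and $t_n = (n/2)\,a$; for odd $n$, $a = (n+1)/2$ and $t_n = n\,a$. Writing $t_n = c\,a$ in both cases, shifts by $t_n$ become shifts by $c$ copies of $a$, which is exactly the form required by Lemma \ref{lemma for optimal m}.

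Applying Lemma \ref{lemma for optimal m} to the coprime pair $(a,b) = (t_{n+1}/d_1, t_{n+2}/d_1)$ with Frobenius level $x_s$ and parameter $k = y_s c$, the hypothesis $m > M = \mathtt{g}(a,b;x_s) + ka$ yields, by specialising to indices of the form $j = j'c$,
\[
\operatorname{d}\!\bigl(m - j' t_n;\, \tfrac{t_{n+1}}{d_1}, \tfrac{t_{n+2}}{d_1}\bigr) \ \geq\ \operatorname{d}\!\bigl(M - j' t_n;\, \tfrac{t_{n+1}}{d_1}, \tfrac{t_{n+2}}{d_1}\bigr) \qquad \text{for every } j' \in \mathbb{Z}_{\geq 0},
\]
with the convention that $\operatorname{d}$ vanishes on negative arguments. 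Strictness is then gained at the single index $j' = y_s$: since $m - y_s t_n > M - y_s t_n = \mathtt{g}(a,b;x_s)$, the definition of the two-variable generalized Frobenius number forces $\operatorname{d}(m - y_s t_n; a, b) \geq x_s + 1$, whereas $\operatorname{d}(M - y_s t_n; a, b) = x_s$. Summing the pointwise inequalities over $j'$ and invoking Lemma \ref{lemma: formula d to sum of d_tilda} on both sides gives $\operatorname{d}(m; t_n, a, b) \geq \operatorname{d}(M; t_n, a, b) + 1 = s + 1$.

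The most delicate step I anticipate is the clean passage from Lemma \ref{lemma for optimal m}, phrased as a shift by the two-variable generator $a$, to the shift by $t_n$ that actually enters Lemma \ref{lemma: formula d to sum of d_tilda}; the divisibility $a \mid t_n$ is precisely what bridges this gap. Once the pointwise estimates and the strict gain at $j' = y_s$ are in place, the summation assembles into the desired $+1$ increment, and substituting the two-variable formula gives the stated closed form.
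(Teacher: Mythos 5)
Your proposal is correct and follows essentially the same route as the paper: the lower bound comes from Lemma \ref{lem-main leamma for thm 1}, and for $m > M$ the strict increase is obtained exactly as in the paper by combining Lemma \ref{lemma: formula d to sum of d_tilda}, Lemma \ref{lemma for optimal m} via the divisibility $\frac{t_{n+1}}{d_1} \mid t_n$, and the two-variable fact $\operatorname{d}\big(\mathtt{g}(a,b;x_s);a,b\big) = x_s$, with strictness gained at the index $j = y_s$. Your explicit specialisation $j = j'c$, $k = y_s c$ merely spells out what the paper leaves implicit, so there is no substantive difference.
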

	\begin{proof}
		Again, for convenient, we denote
		\begin{equation*}
			\mathtt{g}(m) := \mathtt{g}\Big(\frac{t_{n+1}}{d_1}, \frac{t_{n+2}}{d_1}; m\Big)
			\quad
			\text{ and } \quad
			\tilde{\operatorname{d}}(m) := \operatorname{d}\!\Big(m ; \frac{t_{n+1}}{d_1}, \frac{t_{n+2}}{d_1}\Big).
		\end{equation*}
		By Lemma \ref{lem-main leamma for thm 1}, we have $	\operatorname{d}\! \bigg( \mathtt{g}(x_{s})+ y_s t_n ; t_n, \frac{t_{n+1}}{d_1}, \frac{t_{n+2}}{d_1} \bigg) = s$. Therefore, by the definition of $\mathtt{g}\Big(t_{n}, \frac{t_{n+1}}{d_{1}}, \frac{t_{n+2}}{d_{1}}; s\Big)$, it follows that
		\begin{equation*}
			\mathtt{g}(x_{s}) + y_{s}t_{n}
			\leq 
			\mathtt{g}\Big(t_{n}, \frac{t_{n+1}}{d_{1}}, \frac{t_{n+2}}{d_{1}}; s\Big).
		\end{equation*}
		Suppose that $m$ is an integer such that $m > 	\mathtt{g}(x_{s}) + y_{s}t_{n}$. Then $m-y_{s}t_{n} > \mathtt{g}(x_{s})$ and
		\begin{equation*}
			\tilde{\operatorname{d}}\Big(m - y_{s}t_{n} \Big) =	\operatorname{d}\!\Big(m - y_{s}t_{n} ;\frac{t_{n+1}}{d_1}, \frac{t_{n+2}}{d_1} \Big) > x_{s} = \operatorname{d}\!\Big( \mathtt{g}\Big(\frac{t_{n+1}}{d_{1}}, \frac{t_{n+2}}{d_{1}}; x_{s}\Big) ;\frac{t_{n+1}}{d_1}, \frac{t_{n+2}}{d_1} \Big) = \tilde{\operatorname{d}}\Big( \mathtt{g}(x_{s}) \Big)
		\end{equation*}
		Since $t_{n+1}/d_{1}$ divides $t_{n}$ for all positive integers n, following the result in Lemma \ref{lemma for optimal m} we have that, for $j \in \mathbb{Z}_{\geq 0}$,
        \begin{equation*}
        \tilde{\operatorname{d}}\Big( m-jt_n \Big) \geq \tilde{\operatorname{d}}\Big( \mathtt{g}(x_s)+(y_s -j)t_n \Big).
        \end{equation*}
        So, by comparing term by term, we obtain that
		\begin{align*}
			\operatorname{d}\!\Big(m; t_{n}, \frac{t_{n+1}}{d_{1}}, \frac{t_{n+2}}{d_{1}}\Big)
			&=
			\sum_{j=0}^{\lfloor m / t_n\rfloor} \tilde{\operatorname{d}}\Big(m - jt_{n} \Big)
			\geq
			\sum_{j=0}^{\lfloor \mathtt{g}(x_{s}) / t_n\rfloor +y_{s}} \tilde{\operatorname{d}}\Big(m - jt_{n} \Big)
			\\
			&>
			\sum_{j=0}^{\lfloor \mathtt{g}(x_{s}) / t_n\rfloor +y_{s}} \tilde{\operatorname{d}}\Big(\mathtt{g}(x_{s}) + (y_{s}- j)t_{n} \Big)
			=
			\operatorname{d}\!\Big(\mathtt{g}(x_{s}) + y_{s}t_{n}; t_{n}, \frac{t_{n+1}}{d_{1}}, \frac{t_{n+2}}{d_{1}}\Big) = s.
		\end{align*}
		This implies that 
		\begin{equation*}
			\mathtt{g}\Big(t_{n}, \frac{t_{n+1}}{d_{1}}, \frac{t_{n+2}}{d_{1}}; s\Big)
			= \mathtt{g}\Big(\frac{t_{n+1}}{d_{1}}, \frac{t_{n+2}}{d_{1}}; x_{s}\Big) + y_{s}t_{n}
			=
			(x_{s} +1)\frac{t_{n+2}t_{n+1}}{d_{1}^{2}} - \frac{t_{n+2}}{d_{1}} - \frac{t_{n+1}}{d_{1}} + y_{s}t_{n}.
            \tag*{\qedhere}
		\end{equation*}
	\end{proof}
	We now ready to prove Theorem \ref{thm-g(tn,tn+1,tn+2;s)}.
	\begin{proof}[Proof of Theorem \ref{thm-g(tn,tn+1,tn+2;s)}]
		By Lemma \ref{lemma-from-Beck-Kifer} and Theorem \ref{thm: g(t1,t2d1,t3d1) = g(xs)+ystn}, we have, for $s \geq 0$,
		\begin{align*}
			\mathtt{g} (t_{n}&, t_{n+1}, t_{n+2}; s)
			= d_{1}\mathtt{g}\Big(t_{n}, \frac{t_{n+1}}{d_{1}}, \frac{t_{n+2}}{d_{1}} ; s\Big) + t_{n}(d_{1}-1)
			\\
			&= 
			\begin{cases}
				(x^{even}_{s} +1) \frac{t_{n+1}t_{n+2}}{d_{1}} + (y^{even}_{s}+1)t_{n}d_{1} - t_{n} -t_{n+1} - t_{n+2}, &\text{for even } n > N^{even}_{s},
				\\
				(x^{odd}_{s} +1) \frac{t_{n+1}t_{n+2}}{d_{1}} + (y^{odd}_{s}+1)t_{n}d_{1} - t_{n} -t_{n+1} - t_{n+2},
				&\text{for odd } n > N^{odd}_{s}.
			\end{cases}
			\\
			&=
			\begin{cases}
				\frac{(n+1)(n+2)}{4}\Big((2x_{s}^{even} + y_{s}^{even} +3)n + 6x_{s}^{even}\Big) -1, 
				&\text{for even } n > N^{even}_{s},
				\\
				\frac{(n+1)(n+2)}{4}\Big((x_{s}^{odd} + 2y_{s}^{odd} +3)n + 3x_{s}^{odd}-3\Big) - 1,
				&\text{for odd } n > N^{odd}_{s}.
            \end{cases}
            \tag*{\qedhere}	
		\end{align*}
	\end{proof}

		Similarly to Lemma \ref{lem-main leamma for thm 1}, Theorem \ref{thm-g(tn,tn+1,tn+2;s)} and Theorem \ref{thm: g(t1,t2d1,t3d1) = g(xs)+ystn} also hold for all even numbers $n \geq N_{s}^{even}$ and odd numbers $n \geq N_{s}^{odd}$ when $s \notin \mathbb{B}$.

		\section{Proof of Theorem \ref{thm-main-formula} and Theorem \ref{thm-different-of-g}}\label{section-proof-thm-cor}
		
		\begin{proof}[Proof of Theorem \ref{thm-main-formula}]
			Let $s \geq 0$ be an integer. Assume that $s = k(k+1) + i$ for some $k\geq 0$ and $i\in \{0,1,\ldots,2k+1\}$. 
			
			\textbf{Case 1.} If $i \in\{0,1,\ldots, k\}$, then, by Definition \ref{definition x,y} and Theorem \ref{thm-g(tn,tn+1,tn+2;s)}, we have
			\begin{center}
				$\big(x_{s}^{even}, y_{s}^{even}\big) = \big(i , \, 2(k-i)\big)\quad$
				and
				$\quad\big(x_{s}^{odd}, y_{s}^{odd}\big) = \big(2i , \, k-i\big)$,
			\end{center}
			and
			\begin{align*}
				\mathtt{g} (t_{n},& t_{n+1}, t_{n+2}; s)
				\\
				&=
				\begin{cases}
					\frac{(n+1)(n+2)}{4}\Big((2i + 2(k-i) +3)n + 6i\Big) -1 ,
					&\text{for even }  n> N^{even}_{s},
					\\
					\frac{(n+1)(n+2)}{4}\Big((2i + 2(k-i) +3)n + 3(2i)-3\Big) - 1,
					&\text{for odd } n> N^{odd}_{s}.
				\end{cases}
				\\
				&=
				\begin{cases}
					\frac{(n+1)(n+2)}{4}\Big((2k+3)n + 6i\Big) -1 ,
					&\text{for even } n> N^{even}_{s},
					\\
					\frac{(n+1)(n+2)}{4}\Big((2k+3)n + 6i-3\Big) - 1,
					&\text{for odd } n> N^{odd}_{s}.
				\end{cases}
			\end{align*}
			In this case, one can see that $\lfloor \sqrt{s}\rfloor = k$ and $\delta_s = 1$ since $ s =k(k+1) + i \geq k^{2}+k$. It follows that $$q_{s} = 2\lfloor \sqrt{s}\rfloor+2+\delta_s = 2k+2+1 = 2k+3,$$  
			$$\quad c_s = s - \lfloor \sqrt{s}\rfloor^{2} - \delta_{s}\lfloor \sqrt{s}\rfloor  = k(k+1) + i - k^{2} - k= i.$$ Then the formula \eqref{eq formula in Main thm} is proven for this case. 
			
			\textbf{Case 2.} If $i \in\{k+1, k+2,\ldots, 2k+1\}$, then, by Definition \ref{definition x,y} and Theorem \ref{thm-g(tn,tn+1,tn+2;s)}, we have
			\begin{center}
				$\big(x_{s}^{even}, y_{s}^{even}\big) = \big(i-k-1 , \, 4k-2i+3\big)\quad$
				and
				$\quad\big(x_{s}^{odd}, y_{s}^{odd}\big) = \big(2(i-k)-1 , \, 2k-i+1\big)$,
			\end{center}
			and
			\begin{align*}
				&\mathtt{g} (t_{n}, t_{n+1}, t_{n+2}; s)
				\\
				&=
				\begin{cases}
					\frac{(n+1)(n+2)}{4}\Big(\big((2i-2k-2) + (4k-2i+3) +3\big)n + 6(i-k-1)\Big) -1 ,
					&\text{for even } n> N^{even}_{s},
					\\
					\frac{(n+1)(n+2)}{4}\Big(\big((2i-2k-1) + (4k-2i+2) +3\big)n + (6i-6k-3)-3\Big) - 1,
					&\text{for odd } n> N^{odd}_{s}.
				\end{cases}
				\\
				&=
				\begin{cases}
					\frac{(n+1)(n+2)}{4}\Big((2k+4)n + 6(i-k-1)\Big) -1 ,
					&\text{for even } n> N^{even}_{s},
					\\
					\frac{(n+1)(n+2)}{4}\Big((2k+4)n + 6(i-k-1)\Big) - 1,
					&\text{for odd } n> N^{odd}_{s}.
				\end{cases}
			\end{align*}
			
			Since $s = k(k+1) + i \geq k(k+1)+(k+1) = (k+1)^{2}$, it follows that $ \lfloor \sqrt{s}\rfloor = k+1$ but $\delta_s = 0$ since $ s =k(k+1) + i < (k+1)^{2} + (k+1) =  \lfloor \sqrt{s}\rfloor^{2} + \lfloor \sqrt{s}\rfloor$. In this case, we have 
			$$q_{s} = 2\lfloor \sqrt{s}\rfloor+2+\delta_s = 2(k+1)+2+0 = 2k+4,$$
			$$\quad c_s = s - \lfloor \sqrt{s}\rfloor^{2} - \delta_{s}\lfloor \sqrt{s}\rfloor = k(k+1) + i -(k+1)^{2} = i-k-1.
			$$ 			
			Then we obtain immediately the formula \ref{eq formula in Main thm odd}. We finish the proof of Theorem \ref{thm-main-formula}.
		\end{proof}
		
		Next, we give the proof of Theorem \ref{thm-different-of-g}.
		
		\begin{proof}[Proof of Theorem \ref{thm-different-of-g}]
			Let $s \geq 0$. We divided the proof into 2 case; $s+1 \notin\mathbb{B}$ and  $s+1 \in\mathbb{B}$.
			
			\textbf{Case $s+1 \notin\mathbb{B}$.} Suppose that $s \geq 2$ and there exist an integer $k \geq 1$ such that $s = k(k+1)+i$,
			for some $i \in \{0,1,\ldots, 2k+1\}$. Since $s+1 \notin\mathbb{B}$, $s+1$ is not a square and it is not of the form $k(k+1)$. Then we have that either
			\begin{align*}
				k(k+1) &\leq s < s+1 \leq k(k+1)+k 
				\\
				\text{ or }\quad k(k+1)+ (k+1) &\leq s < s+1 \leq k(k+1)+(2k+1).
			\end{align*}
			By the proof of Theorem \ref{thm-main-formula}, both cases have $\lfloor \sqrt{s+1}\rfloor = \lfloor \sqrt{s}\rfloor$ and $\delta_{s+1} = \delta_s$. Thus $q_{s+1} = 2\lfloor \sqrt{s+1}\rfloor +2+\delta_{s+1} = 2\lfloor \sqrt{s}\rfloor +2+\delta_{s} = q_{s}$ and
			\begin{align*}
				&\mathtt{g} (t_{n}, t_{n+1}, t_{n+2}; s+1)-		\mathtt{g} (t_{n}, t_{n+1}, t_{n+2}; s)
				\\
				&= \frac{(n+1)(n+2)}{4}\Big( (q_{s+1} - q_{s})n + 6(c_{s+1}-c_{s})\Big)
				\\
				&= 
				\frac{(n+1)(n+2)}{4}\Bigg(6\bigg((s+1 - \lfloor\sqrt{s+1}\rfloor^{2}- \delta_{s+1}\lfloor\sqrt{s+1}\rfloor)-(s- \lfloor\sqrt{s}\rfloor^{2}- \delta_{s}\lfloor\sqrt{s}\rfloor)\bigg)\Bigg).
			\end{align*}
			Therefore
			\begin{equation*}
				\mathtt{g} (t_{n}, t_{n+1}, t_{n+2}; s+1)-		\mathtt{g} (t_{n}, t_{n+1}, t_{n+2}; s)
				= \frac{6(n+1)(n+2)}{4}.
			\end{equation*}
			
			\textbf{Case $s+1 \in\mathbb{B}$.} There exists an integer $k \geq 1$ such that one of the following statements holds: There exists $k \in \mathbb{Z}_{\geq1}$ such that
			\begin{enumerate}[(i)]
				\item $s+1 = k^{2} = (k-1)k + k\quad$ and $\quad s = k^{2}-1 = (k-1)k + (k-1)$ \label{case i in cor-proof-komatsu}
				\item  $s+1 = k(k+1) = (k-1)k + 2k\quad$  and $\quad s = k(k+1)-1 = (k-1)k + (2k-1)$ \label{case ii in cor-proof-komatsu}
			\end{enumerate}
			If $n$ is even and \eqref{case i in cor-proof-komatsu} holds, then, following the proof of in Theorem \ref{thm-main-formula}, we obtain that $ \lfloor \sqrt{s+1}\rfloor = k$, $\lfloor \sqrt{s}\rfloor = k-1$, $\delta_{s+1} = 0$,  and $\delta_{s} = 1$. So
			\begin{align*}
				q_{s+1} - q_{s} 
				&= \big(2\lfloor\sqrt{s+1}\rfloor+2+\delta_{s+1}\big) - \big(2\lfloor\sqrt{s}\rfloor+2+\delta_{s}\big)
				\\
				&= \big(2k+2\big) - \big(2(k-1)+2+1\big) = 1
			\end{align*}
			and
			\begin{align*}
				c_{s+1} - c_{s} 
				&= \big(s+1 - \lfloor\sqrt{s+1}\rfloor^{2} -  \delta_{s+1}\lfloor\sqrt{s+1}\rfloor\big)-\big(s-\lfloor\sqrt{s}\rfloor^{2}-\delta_s\lfloor\sqrt{s}\rfloor\big)
				\\
				&= \big(s+1 - k^{2}\big)-\big(s-(k-1)^{2} - (k-1)\big)
				\\
				&= 1 -k^{2}+k(k-1) = 1 - k.
			\end{align*}
			If $n$ is even and \eqref{case ii in cor-proof-komatsu} holds, we obtain	$\lfloor\sqrt{s+1}\rfloor = k  = \lfloor\sqrt{s}\rfloor$,
			$\delta_{s+1} = 1$ and $\delta_{s} = 0$. In this case, we have
			$$
			q_{s+1} - q_{s} = (2k+2+1)-(2k+2+0) = 1
			$$
			and
			$$
			c_{s+1} - c_{s} = \big(s+1 - k(k+1)\big)-\big(s-k^{2}\big)
			= 1 - k.
			$$ Hence, if $n$ is even and $s+1 \in \mathbb{B}$, then
			\begin{align*}
				\mathtt{g} (t_{n}, &t_{n+1}, t_{n+2}; s+1)-		\mathtt{g} (t_{n}, t_{n+1}, t_{n+2}; s)
				\\
				&= \frac{(n+1)(n+2)}{4}\Big( (q_{s+1} - q_{s})n + 6(c_{s+1}-c_{s})\Big) = \frac{(n+1)(n+2)}{4}\Big(n -6k + 6\Big).
			\end{align*}
			
			If $n$ is odd and \eqref{case i in cor-proof-komatsu} holds, similarly to the case $n$ is even, we have $\lfloor\sqrt{s+1}\rfloor = k$, $ = \lfloor\sqrt{s}\rfloor = k-1$, $\delta_{s+1} = 0$ and $\delta_{s} = 1$. Thus $q_{s+1} - q_{s} = 1$, $c_{s+1} - c_{s} = 1 - k$ and $\delta_{s+1} - \delta_{s} = -1 $.
			
			If $n$ is odd and \eqref{case ii in cor-proof-komatsu} holds, we have $\lfloor\sqrt{s+1}\rfloor = k = \lfloor\sqrt{s}\rfloor$, $\delta_{s+1} = 1$ and $\delta_{s} = 0$. Thus $q_{s+1} - q_{s} = 1$, $c_{s+1} - c_{s} = 1 - k$  and $\delta_{s+1} - \delta_{s} = 1 - 0 = 1$. 
			
			Hence, if $n$ is odd and $s+1 \in \mathbb{B}$, then
			\begin{align*}
				\mathtt{g} (t_{n}, &t_{n+1}, t_{n+2}; s+1)-		\mathtt{g} (t_{n}, t_{n+1}, t_{n+2}; s)
				\\&=
				\frac{(n+1)(n+2)}{4}\big((q_{s+1}-q_{s})n + 6(c_{s+1} - c_{s}) - 3(\delta_{s+1}-\delta_{s})\big) 
				\\
				&=\begin{cases}
					\frac{(n - 6k + 9)(n+1)(n+2)}{4}	&\text{ if } s+1 = k^{2},
					\\
					\frac{(n - 6k + 3)(n+1)(n+2)}{4}	&\text{ if } s+1 = k(k+1).
				\end{cases}
			\end{align*}
			This completes the proof of Theorem \ref{thm-different-of-g}.
		\end{proof}

\begin{remark}
  We introduce Theorem \ref{thm: g(t1,t2d1,t3d1) = g(xs)+ystn}, a reformulation of Theorem \ref{thm-g(tn,tn+1,tn+2;s)} by using the sequence $(x_s,y_s)$ from Definition \ref{definition x,y}. We can extend it by observing that, in the proof of Theorem \ref{thm: g(t1,t2d1,t3d1) = g(xs)+ystn}, its result can be extended beyond looking only at triangular numbers to any three integers that satisfy certain conditions as follows:
  
  Let $s$ be a positive integer and assume that $s = k(k+1) + i$ for some $i \in \{0,1,\ldots, 2k+1\}$ and let $A_{1}, A_{2}, A_{3} \in\mathbb{Z}_{> 1}$ such that $\gcd(A_{1},A_{2},A_{3}) = 1 $, $A_{1} \equiv 0\pmod{A_{2}}$. Let $K_{s}^{ev} = \lfloor\sqrt{s+1}\rfloor -1$ and $ K_{s}^{od} = \lfloor\frac{\sqrt{4s+5}-1}{2}\rfloor$. 

  If $2 < \frac{A_{2}A_{3}}{A_{1}}$ and $\frac{A_{2}A_{3}}{A_{1}} < 2 + \frac{1}{K_{s}^{ev}}$ (if $k\geq1$,i.e. $s \geq 2$), then
  \begin{equation*}
    \mathtt{g}\Big(A_{1}, A_{2}, A_{3}; s\Big) = \mathtt{g}\Big( A_{2}, A_{3}; x_{s}^{even}\Big) + y_{s}^{even}A_{1}.
  \end{equation*}

If $\frac{1}{2} < \frac{A_{2}A_{3}}{A_{1}} < 1$ ( if $k=0$) and $ \frac{1}{2} < \frac{A_{2}A_{3}}{A_{1}} < \frac{K_{s}^{od}}{2K_{s}^{od}-1}$ (if $k \geq 1$), then
    \begin{equation*}
        \mathtt{g}\bigg(A_{1}, A_{2}, A_{3}; s\bigg) = \mathtt{g}\bigg( A_{2}, A_{3}; x_{s}^{odd}\bigg) + y_{s}^{odd}A_{1}.
    \end{equation*}
These results can be proved in the same way as in Theorem \ref{thm: g(t1,t2d1,t3d1) = g(xs)+ystn} by induction proving and using some effort to consider the relations in each step. 
\end{remark}

		
        
	\end{document}